\pdfoutput=1
\RequirePackage{ifpdf}
\ifpdf 
\documentclass[pdftex]{sigma}
\else
\documentclass{sigma}
\fi

\numberwithin{equation}{section}

\newtheorem{Theorem}{Theorem}[section]
\newtheorem{Corollary}[Theorem]{Corollary}
\newtheorem{Lemma}[Theorem]{Lemma}
\newtheorem{Proposition}[Theorem]{Proposition}
 { \theoremstyle{definition}
\newtheorem{Definition}[Theorem]{Definition}
\newtheorem{notation}[Theorem]{Notation}
\newtheorem{Example}[Theorem]{Example}
\newtheorem{Remark}[Theorem]{Remark} }

\DeclareMathOperator{\rig}{rig}
\DeclareMathOperator{\Aut}{Aut}
\DeclareMathOperator{\dR}{dR}
\DeclareMathOperator{\MW}{MW}
\DeclareMathOperator{\diag}{diag}
\DeclareMathOperator{\prim}{prim}
\DeclareMathOperator{\Frob}{Frob}

\begin{document}
\allowdisplaybreaks

\newcommand{\arXivNumber}{1706.01626}

\renewcommand{\thefootnote}{}

\renewcommand{\PaperNumber}{087}

\FirstPageHeading

\ShortArticleName{Zeta Functions of Monomial Deformations of Delsarte Hypersurfaces}

\ArticleName{Zeta Functions of Monomial Deformations\\ of Delsarte Hypersurfaces\footnote{This paper is a~contribution to the Special Issue on Modular Forms and String Theory in honor of Noriko Yui. The full collection is available at \href{http://www.emis.de/journals/SIGMA/modular-forms.html}{http://www.emis.de/journals/SIGMA/modular-forms.html}}}

\Author{Remke KLOOSTERMAN}

\AuthorNameForHeading{R.~Kloosterman}

\Address{Universit\`a degli Studi di Padova, Dipartimento di Matematica,\\ Via Trieste 63, 35121 Padova, Italy}
\Email{\href{mailto:klooster@math.unipd.it}{klooster@math.unipd.it}}
\URLaddress{\url{http://www.math.unipd.it/~klooster/}}

\ArticleDates{Received June 09, 2017, in f\/inal form November 01, 2017; Published online November 07, 2017}

\Abstract{Let $X_\lambda$ and $X_\lambda'$ be monomial deformations of two Delsarte hypersurfaces in weighted projective spaces. In this paper we give a~suf\/f\/icient condition so that their zeta functions have a common factor. This generalises results by Doran, Kelly, Salerno, Sperber, Voight and Whitcher [arXiv:1612.09249], where they showed this for a particular monomial deformation of a~Calabi--Yau invertible polynomial. It turns out that our factor can be of higher degree than the factor found in [arXiv:1612.09249].}

\Keywords{monomial deformation of Delsarte surfaces; zeta functions}

\Classification{14G10; 11G25; 14C22; 14J28; 14J70; 14Q10}

\renewcommand{\thefootnote}{\arabic{footnote}}
\setcounter{footnote}{0}

\section{Introduction}
Fix a f\/inite f\/ield $\mathbf{F}_{q}$ and a positive integer $n$. In this paper we study a particular class of deformations of Delsarte hypersurfaces in $\mathbf{P}^n_{\mathbf{F}_q}$. There has been an extensive study of the behaviour of the zeta function in families of varieties. First results were obtained by Dwork (e.g.,~\cite{DworkPadicCycle}) and Katz~\cite{Katz}. In the latter paper the author studies a pencil of hypersurface in $\mathbf{P}^n$ and describe a~dif\/ferential equation, whose solution is the Frobenius matrix on the middle cohomology for a~general member of this pencil.

More recently, the behaviour of the zeta function acquired renewed interest because of two interesting (and very dif\/ferent) applications. Candelas, de la Ossa and Rodriguez--Villegas \cite{Cand2} studied the behaviour of the zeta family in a particular family of quintic threefolds in~$\mathbf{P}^4$, with a particular interest in phenomena, analogous to phenomena occurring in characteristic zero related with mirror symmetry and let to many subsequent papers by various authors. Another application of Katz' dif\/ferential equation can be found in algorithms to determine the zeta function of a hypersurface ef\/f\/iciently (see \cite{LauDefT,PanTui}).

The main aim of this paper is to generalize and to comment on a recent result of Doran, Kelly, Salerno, Sperber, Voight and Whitcher \cite{DKSSVW} on the zeta function of certain pencils of Calabi--Yau hypersurfaces. For a more extensive discussion on the history of this particular result we refer to the introduction of \cite{DKSSVW}.

To describe the main results from \cite{DKSSVW}, f\/ix a matrix $A:=(a_{i,j})_{0\leq i,j\leq n}$ with nonnegative integral coef\/f\/icients and nonzero determinant. Then with $A$ we can associate the polynomial
\begin{gather*} F_A:=\sum_{i=0}^n\prod_{j=0}^n x_i^{a_{i,j}}.\end{gather*}
Assume that the entries of $A^{-1} (1,\dots,1)^{\rm T}$ are all positive, say $\frac{1}{e}(w_0,\dots,w_n)$ with $e,w_i\in \mathbf{Z}_{>0}$. Then $F_A$ def\/ines a hypersurface of degree $e$ in $\mathbf{P}_{\mathbf{F}_q}(w_0,\dots,w_n)$. Assume that we choose $A$ such that $\gcd(q,e)=1$. (Equivalently, we may assume that $\gcd(\det(A),q)=1$.)

If the hypersurface is geometrically irreducible then we call it a \emph{Delsarte hypersurface}. A~subvariety $X\subset \mathbf{P}(w_0,\dots,w_n)$ is called quasismooth if the af\/f\/ine quasicone of $X$ is smooth away from the vertex. If $F_A$ def\/ines a quasismooth hypersurface then $F_A$ is called an \emph{invertible polynomial}. If $F_A=0$ def\/ines a Calabi--Yau manifold, i.e., $e=n+1$, then we can consider the one-parameter family $X_{A,\psi}$ given by the vanishing of
\begin{gather*} F_A-(n+1)\psi \prod_{i=0}^n x_i.\end{gather*}
The factor $-(n+1)$ is included for historic reasons. In the sequel we will work with the parameter $\lambda=-(n+1)\psi$ for simplicity.

In a recent preprint Doran, Kelly, Salerno, Sperber, Voight and Whitcher \cite{DKSSVW} showed the following result (using Dwork cohomology and some results on the Picard--Fuchs equation):
\begin{Theorem}[\cite{DKSSVW}]\label{thmDKSSVW} Let $A$ and $A'$ be $(n+1)\times (n+1)$-matrices with nonnegative entries such that $F_A$ and $F_{A'}$ are invertible Calabi--Yau polynomials of degree $n+1$. Assume that $(1,\dots,1)^{\rm T}$ is an eigenvector of both $A$ and $A'$ and that \begin{gather*}\gcd(q,(n+1)\det(A)\det(A'))=1.\end{gather*}
Moreover, assume that $(1,\dots,1)A^{-1}$ and $(1,\dots,1){A'}^{-1}$ are proportional. Then for any $\psi\in \mathbf{F}_q$ such that $X_{A,\psi}$ and $X_{A',\psi}$ are smooth and nondegenerate we have that the polynomials
\begin{gather*} \left(Z(X_{A,\psi},T)\prod_{i=0}^{n-1}\big(1-q^iT\big) \right)^{(-1)^{n}}\qquad \mbox{and} \qquad
 \left(Z(X_{A',\psi},T)\prod_{i=0}^{n-1}\big(1-q^iT\big) \right)^{(-1)^{n}}\end{gather*}
have a common factor of degree at least the order of the Picard--Fuchs equation of~$X_{A,\psi}$.
\end{Theorem}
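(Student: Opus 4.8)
The plan is to exploit the fact that each Delsarte hypersurface is a quotient of a Fermat-type hypersurface by a finite abelian group determined by the matrix, so that its middle cohomology decomposes into eigenspaces indexed by characters. First I would choose an integer $m$ (for instance a common denominator of the entries of $A^{-1}$) and the Shioda-type rational map from the Fermat hypersurface $\{\sum y_i^m=0\}$ to $X_A$ given by monomials $x_i=\prod_j y_j^{b_{ij}}$ with $B$ essentially $\det(A)A^{-1}$. Under this map the deformation monomial $\prod_i x_i$ pulls back to a \emph{single} monomial in the $y_j$, so the whole pencil $X_{A,\psi}$ lifts to a one-parameter monomial deformation of the Fermat hypersurface. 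The same applies to $A'$, and the role of the hypotheses is precisely to arrange that the two lifted pencils live on quotients of the \emph{same} Fermat hypersurface.

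Next I would pass to Frobenius eigenvalues. On the undeformed Fermat hypersurface the eigenvalues on primitive middle cohomology are Jacobi sums indexed by characters $c=(c_0,\dots,c_n)$; such a character descends to the $X_A$-part only when it is invariant under the quotient group, and this invariance is governed exactly by the vector $(1,\dots,1)A^{-1}$. For the deformed pencil I would write the point count as a sum of $\psi$-twisted exponential sums and use the Gross--Koblitz formula to express the contribution of each admissible character as a $p$-adic hypergeometric function of $\psi$. The subspace of cohomology that actually varies with $\psi$ — equivalently, the span of a holomorphic period together with its derivatives — has dimension equal to the order of the Picard--Fuchs equation, and I would identify it with the set of admissible characters that are not annihilated by the extra monomial.

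The key step is to show that the Frobenius structure on this varying subspace depends on $A$ only through $(1,\dots,1)A^{-1}$ up to a scalar. Proportionality of $(1,\dots,1)A^{-1}$ and $(1,\dots,1){A'}^{-1}$ forces the admissible characters for $A$ and for $A'$ to produce the same fractional exponents entering the Gamma-factors, so the associated hypergeometric Frobenius matrices coincide after the change of variable in $\psi$ dictated by the proportionality constant. By Katz's deformation theorem the Frobenius matrix on relative primitive cohomology is the solution, with prescribed Frobenius structure, of the Picard--Fuchs equation; since the two pencils share the same operator on their varying pieces, these pieces are isomorphic as $F$-isocrystals, and their specialisations at a fixed smooth, nondegenerate $\psi\in\mathbf{F}_q$ have the same characteristic polynomial of Frobenius. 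Matching this against the normalisation $\big(Z(X_{A,\psi},T)\prod_{i=0}^{n-1}(1-q^iT)\big)^{(-1)^n}$, whose trivial factors remove exactly the non-varying Tate-twist cohomology, yields a common factor of degree at least the order of the Picard--Fuchs equation.

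I expect the main obstacle to be the comparison step above: proving that the Frobenius action on the varying subspace is genuinely identical, rather than merely that the two families have equal Picard--Fuchs operators (which by itself only determines the eigenvalues up to the ambiguity of the Frobenius structure and of initial conditions). Controlling this ambiguity — pinning down the $p$-adic constants in the Gross--Koblitz expressions and checking that the factors $\prod_{i=0}^{n-1}(1-q^iT)$ and the sign $(-1)^n$ strip away precisely the same pieces on both sides — is where the real work lies. This is presumably also where the eigenvector condition on $(1,\dots,1)^{\rm T}$ enters, guaranteeing that the deformation respects the quasihomogeneous structure and yields a clean single-parameter hypergeometric family, while $\gcd(q,(n+1)\det(A)\det(A'))=1$ ensures tame ramification and a clean character decomposition.
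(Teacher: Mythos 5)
Your opening paragraph coincides with the paper's starting point: both pencils pull back under Shioda maps to monomial deformations of a single Fermat hypersurface, so both $X_{A,\psi}$ and $X_{A',\psi}$ are birational to quotients of one cover $Y_\psi$ by finite abelian groups $G$, $G'$ of torus automorphisms. From there, however, you follow the Dwork-cohomology/hypergeometric route of \cite{DKSSVW} itself, and your argument has a genuine gap at exactly the step you flag. The claim ``since the two pencils share the same operator on their varying pieces, these pieces are isomorphic as $F$-isocrystals'' is a non sequitur: a Frobenius structure is extra data on a connection, and two $F$-isocrystals with the same underlying differential operator need not be isomorphic --- they can differ by a constant twist, or by more if the connection is reducible. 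Closing this requires (i) irreducibility (or a suitable decomposition) of the Picard--Fuchs connection on the varying piece, so that a Frobenius structure on it is unique up to scalar, and (ii) pinning down that scalar, e.g., by matching unit roots or expansions at a boundary point of the $\psi$-line. You explicitly defer this (``where the real work lies''), so the proposal does not prove the theorem; that rigidity statement is the analytic core of \cite{DKSSVW}. A secondary worry: you invoke ``the change of variable in $\psi$ dictated by the proportionality constant'', but the theorem asserts a common factor at the \emph{same} $\psi\in\mathbf{F}_q$; with the normalisation used in the paper the two pulled-back exponent vectors $\mathbf{b}$, $\mathbf{b}'$ have equal entry-sum and are proportional, hence equal, so the two lifted pencils literally coincide and no rescaling is allowed --- if a genuine rescaling were needed, specialising both sides at one value of $\psi$ would compare different fibers and the conclusion would not follow.

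The paper avoids the comparison problem altogether, and this is the main structural difference. Instead of producing two a priori different Frobenius modules and proving they agree, it produces a single one sitting inside both: the invariant cohomology $H^{n-1}(Y_\psi)^{G.G'}$ of the common cover. The technical heart is Proposition~\ref{prpFactor}: using the explicit Griffiths--Dwork bases of Lemma~\ref{lemBasis} and Poincar\'e duality in rigid cohomology, it shows that restricting to the torus where the Shioda quotient map is a morphism loses no invariant cohomology, so the characteristic polynomial of Frobenius on $H^{n-1}(Y_\psi)^{G.G'}$ divides those on $H^{n-1}(X_\psi)$ and $H^{n-1}(X'_\psi)$ (Theorem~\ref{thmMainO}); smoothness and purity then rule out cancellation in the zeta-function expressions (Corollary~\ref{corMain}). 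The degree bound, which you try to obtain from hypergeometric considerations, is in the paper purely combinatorial: by G\"ahrs' theorem the dimension of the $G_{\max}$-invariants equals the order of the Picard--Fuchs equation (Lemma~\ref{lemPF}), and since $G.G'\subset G_{\max}$ the common factor is divisible by the characteristic polynomial of Frobenius on $H^{n-1}(Y_\psi)^{G_{\max}}$, which Proposition~\ref{prpFactorDiv} identifies with the factor $R_\lambda$ of \cite{DKSSVW}. So where your plan must solve a rigidity problem for Frobenius structures on a differential equation, the paper's factor is common by construction --- and can even have strictly larger degree than the order of the Picard--Fuchs equation.
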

For a precise def\/inition of nondegenerate we refer to the paper \cite{DKSSVW}. The condition $(1,1,\dots,1)^{\rm T}$ is an eigenvector of~$A$ implies that $X_{A,\psi}\subset \mathbf{P}^n$. The condition $(1,\dots,1)A^{-1}$ is proportional to $(1,\dots,1){A'}^{-1}$ is the same as the condition \emph{dual weights being equal} from the paper~\cite{DKSSVW}, whenever the latter condition is def\/ined.

In this paper we prove a generalisation of this result. We aim to allow more matrices $A$, more vectors $\mathbf{a}$, to drop the Calabi--Yau assumption, to have a simpler nondegenerate assumption and to f\/ind a common factor of higher degree. Moreover, as a by-prodcut of our approach we obtain additional information on the degree of the factor found in \cite{DKSSVW}.

To be more precise, we start again with an invertible $(n+1)\times (n+1)$-matrix $A$ such that~$X_{A,0}$ is quasismooth, but we drop the Calabi--Yau condition. Let $d$ be an integer such that $B:=dA^{-1}$ has integral entries. Let $\mathbf{w}=(w_0,\dots,w_n):=B(1,\dots,1)^{\rm T}$. If all the $w_i$ are positive then $F_A$ def\/ines a hypersurface in the weighted projective space $\mathbf{P}(\mathbf{w})$.

Fix now a vector $\mathbf{a}:=(a_0,\dots,a_n)$ such that $a_i\in \mathbf{Z}_{>0}$, the entries of $\mathbf{b}:=\mathbf{a} B$ are nonnegative and $\sum\limits_{i=0}^n a_iw_i=d$. Then $ F_{A,\psi}:=F_A-(n+1)\psi \prod\limits_{i=0}^n x_i^{a_i}$ def\/ines a family of hypersurfaces $X_{\psi}$ in~$\mathbf{P}(\mathbf{w})$ each birational to a quotient of $Y_\psi\subset \mathbf{P}^n$ given by
\begin{gather*} \sum_{i=0}^n y_i^d-(n+1)\psi \prod_{i=0}^n y_i^{b_i}.\end{gather*}
This is a one-dimensional monomial deformation of a Fermat hypersurface. It is easy to determine for which values of~$\psi$ the hypersurface is smooth \cite[Lemma~3.7]{zetafam}. The idea to study Delsarte hypersurface by using their Fermat cover dates back to Shioda~\cite{ShiExpPic} and has then been used by many authors for to discuss solve problems concerning Delsarte hypersurfaces by con\-si\-de\-ring a similar problem on Fermat surfaces. Recent applications of this idea, in contexts similar to our setup, can be found in \cite{BiniSH,BGK, KellyMirror}.

Take now a further $(n+1)\times(n+1)$ matrix $A'$ and a vector $\mathbf{a}'$ yielding a second family~$X'_{\psi}$ in a possibly dif\/ferent weighted projective space.

It is straightforward to show that if $\mathbf{a} A^{-1}$ and $\mathbf{a}' (A')^{-1}$ are proportional then the fami\-lies~$X_\psi$,~$X'_\psi$ have a common cover of the type $Y_\psi$, i.e., there exist subgroup schemes~$G$ and~$G'$ of the scheme of automorphisms $\Aut(Y_\psi)$ such that $G$ and $G'$ are def\/ined over $\mathbf{F}_q$, $Y_\psi/G$ is birational to $X_\psi$ and $Y_\psi/G'$ is birational to~$X'_{\psi}$. The automorphisms in~$G$ and $G'$ are so-called torus or diagonal automorphisms, i.e., each automorphism multiplies a coordinate with a root of unity. In particular, $G$~and~$G'$ are f\/inite abelian groups. We will use this observation to show that:

\begin{Theorem}\label{thmMain} Let $A$ and $A'$ be $(n+1)\times (n+1)$-matrices with nonnegative entries, such that the entries of $ (w_0,\dots,w_n)^{\rm T}:=A^{-1} (1,\dots,1)^{\rm T}$ and of $(w'_0,\dots,w'_n)^{\rm T}:={A'}^{-1}(1,\dots,1)^{\rm T}$ are all positive and $\gcd(q,\det(A)\det(A'))=1$. Fix two vectors $\mathbf{a}:=(a_0,\dots,a_n)$ and $\mathbf{a}':=(a'_0,\dots,a'_n)$ consisting of nonnegative integers such that the equalities $\sum\limits_{i=0}^n a_iw_i=1$ and $\sum\limits_{i=0}^n a'_iw'_i=1$ hold and such that $\mathbf{a} A$ and $\mathbf{a}' A'$ are proportional. Let $X_\psi$, $X'_\psi$, $Y_\psi$, $G$ and $G'$ as above. Denote with $G.G'$ the subgroup of $\Aut(Y_\psi)$ generated by $G$ and~$G'$.
\begin{enumerate}\itemsep=0pt
\item[$1.$] If $Y_\psi$ is smooth then the characteristic polynomial of Frobenius acting on $H^{n-1}(Y_{\psi})^{G.G'}$ divides both the characteristic polynomial of Frobenius acting on $H^{n-1}(X_\psi)$ and the cha\-rac\-teristic polynomial of Frobenius on~$H^{n-1}(X'_\psi)$.

\item[$2.$] If, moreover, $(1,\dots,1)$ is an eigenvector of both $A$ and $A'$ and both $X_\psi$ and $X'_\psi$ are smooth then we have that the polynomials
\begin{gather*} \left(Z(X_{\psi},T)\prod_{i=0}^{n-1}\big(1-q^iT\big) \right)^{(-1)^{n}}\qquad
\text{and} \qquad \left(Z(X'_{\psi},T)\prod_{i=0}^{n-1}\big(1-q^iT\big) \right)^{(-1)^{n}}\end{gather*}
have a common factor of positive degree.
\end{enumerate}
\end{Theorem}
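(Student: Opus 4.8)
The plan is to phrase everything in terms of $\ell$-adic \'etale cohomology $H^\bullet(\,\cdot\,,\mathbf{Q}_\ell)$ with $\ell\neq\operatorname{char}\mathbf{F}_q$, and to reduce both statements to the action of the finite abelian groups $G$, $G'$ on the smooth variety $Y_\psi$. The basic tool is the transfer isomorphism: if a finite group $H$ of order prime to $\operatorname{char}\mathbf{F}_q$ acts on a projective $\mathbf{F}_q$-variety $Z$, then the quotient map induces a $\Frob$-equivariant isomorphism $H^i(Z/H,\mathbf{Q}_\ell)\cong H^i(Z,\mathbf{Q}_\ell)^H$. The orders of $G$ and $G'$ divide a power of $d$, which is prime to $q$, so this applies; the hypothesis that $G$ and $G'$ are defined over $\mathbf{F}_q$ ensures that $\Frob$ normalises them, so the invariant subspaces are $\Frob$-stable and the isomorphisms are $\Frob$-equivariant. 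Since every element of $G$ and of $G'$ is a diagonal automorphism, the two groups commute, $G.G'=GG'$ is again finite abelian, and $H^{n-1}(Y_\psi)^{G.G'}=H^{n-1}(Y_\psi)^{G}\cap H^{n-1}(Y_\psi)^{G'}$ is a $\Frob$-stable subspace of each of $H^{n-1}(Y_\psi)^{G}$ and $H^{n-1}(Y_\psi)^{G'}$.

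To prove part~1 I would compare $H^{n-1}(Y_\psi)^{G}$ with $H^{n-1}(X_\psi)$. The dominant rational map $\phi\colon Y_\psi\dashrightarrow X_\psi$ coming from the common cover factors as $Y_\psi\to Y_\psi/G\dashrightarrow X_\psi$, the first map of degree $|G|$ and the second birational. Resolving the graph of $\phi$ and using the projection formula $\phi_*\phi^*=|G|\cdot\mathrm{id}$ shows that $\phi^*\colon H^{n-1}(X_\psi)\to H^{n-1}(Y_\psi)$ is an injection of $\Frob$-modules whose image lies in the $G$-invariants. Together with the transfer identification $H^{n-1}(Y_\psi)^{G}\cong H^{n-1}(Y_\psi/G)$, the goal is to promote this to an identification of $H^{n-1}(Y_\psi)^{G}$ with a $\Frob$-stable sub-quotient of $H^{n-1}(X_\psi)$, i.e.\ to check that the birational map $Y_\psi/G\dashrightarrow X_\psi$ does not destroy the middle cohomology. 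Granting this, $H^{n-1}(Y_\psi)^{G.G'}$ is a $\Frob$-stable subspace of such a sub-quotient, so its characteristic polynomial divides that of $H^{n-1}(X_\psi)$; repeating the argument with $G'$ gives the divisibility for $X'_\psi$, which is part~1.

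For part~2 the extra assumption that $(1,\dots,1)$ is an eigenvector of $A$ and $A'$ forces $X_\psi$ and $X'_\psi$ to be smooth hypersurfaces in honest projective space $\mathbf{P}^n$, so weak Lefschetz gives $H^i(X_\psi)\cong H^i(\mathbf{P}^n)$ for $i\neq n-1$ and hence $\big(Z(X_\psi,T)\prod_{i=0}^{n-1}(1-q^iT)\big)^{(-1)^n}=\det\!\big(1-\Frob\,T\mid H^{n-1}_{\prim}(X_\psi)\big)$, the factors $\prod_{i=0}^{n-1}(1-q^iT)$ cancelling exactly the Tate contributions of the even groups $H^{2j}(\mathbf{P}^n)$. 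Running the argument of part~1 with primitive cohomology, which is preserved by $\Frob$ and by the diagonal groups, identifies $H^{n-1}_{\prim}(X_\psi)$ with (a sub-quotient containing) $H^{n-1}_{\prim}(Y_\psi)^{G}$, so that $\det\!\big(1-\Frob\,T\mid H^{n-1}_{\prim}(Y_\psi)^{G.G'}\big)$ divides both normalised expressions and is the desired common factor. It remains to see that it has positive degree, i.e.\ that $H^{n-1}_{\prim}(Y_\psi)^{G.G'}\neq 0$. Here I would use the character decomposition of $H^{n-1}_{\prim}(Y_\psi)$ under the full diagonal torus: the proportionality assumption makes the two families deform $Y_\psi$ along the same monomial $\prod_i y_i^{b_i}$, whose character is trivial on both decking groups, so the eigenspaces attached to that character coset are fixed by $G$ and by $G'$; these carry the Picard--Fuchs variation and are nonzero for smooth $Y_\psi$ by \cite{zetafam}, hence lie in $H^{n-1}_{\prim}(Y_\psi)^{G.G'}$. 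This both proves part~2 and explains the link with the Picard--Fuchs order appearing in Theorem~\ref{thmDKSSVW}.

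The step I expect to be the genuine obstacle is the middle-cohomology comparison in part~1, namely passing from the quotient $Y_\psi/G$ to the birational model $X_\psi$. In part~1 one does not assume $X_\psi$ smooth, and a birational map between possibly singular projective varieties need not induce an isomorphism on $H^{n-1}$. The plan to control this is to exploit that $Y_\psi/G$ has only finite quotient singularities, so with $\mathbf{Q}_\ell$-coefficients it is a Poincar\'e-duality space and behaves cohomologically like a smooth variety, and then to dominate both $Y_\psi/G$ and $X_\psi$ by a common smooth model, using weak factorisation to see that the two middle cohomologies differ only by classes of Tate type $\mathbf{Q}_\ell(-j)$. One finally checks that the subspace $H^{n-1}(Y_\psi)^{G.G'}$ (respectively its primitive part) meets none of these discrepancy classes, so that the divisibility of characteristic polynomials is unaffected; verifying this, together with the non-vanishing used in part~2, is where the explicit geometry of the Fermat cover has to replace the formal transfer argument.
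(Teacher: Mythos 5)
Your overall skeleton (pass to $G$-invariants of the Fermat cover via transfer, then deal with the fact that $Y_\psi\dashrightarrow X_\psi$ is only a rational map) matches the paper's strategy, and your part~2 reductions (weak Lefschetz plus purity to exclude cancellation in the zeta function, nonvanishing via the character coset of the deformation monomial) are essentially the paper's. But there is a genuine gap at exactly the point you flag: you never prove that the birational map $Y_\psi/G\dashrightarrow X_\psi$ identifies $H^{n-1}(Y_\psi)^{G}$ (or even just $H^{n-1}(Y_\psi)^{G.G'}$) with a Frobenius-stable subquotient of $H^{n-1}(X_\psi)$. Your injectivity claim $\phi^*\colon H^{n-1}(X_\psi)\hookrightarrow H^{n-1}(Y_\psi)^G$ is itself shaky when $X_\psi$ is singular (Gysin pushforwards and the projection formula need duality on the graph or a resolution of it), and even granted, it gives divisibility in the \emph{wrong} direction unless you also prove surjectivity onto the invariants. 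Worse, the tools you propose for closing the gap are not available over $\mathbf{F}_q$: resolution of the graph and weak factorization are characteristic-zero theorems (open in positive characteristic beyond low dimensions), and the assertion that birational discrepancies in middle cohomology are of Tate type is false in general even for smooth varieties in characteristic zero --- blowing up a curve $C$ in a smooth threefold adds $H^1(C)(-1)$ to $H^3$, which is not Tate unless $C$ is rational.

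The paper closes this gap by a completely explicit argument that avoids birational geometry of the singular models: it works with the affine complements $V_\lambda=\mathbf{P}^n\setminus Y_\lambda$ and $\tilde U_\lambda$ (the pullback of $U_\lambda=\mathbf{P}(\mathbf{w})\setminus X_\lambda$ under the morphism $(z_0:\dots:z_n)\mapsto(z_0^{w_0}:\dots:z_n^{w_n})$) in rigid/Monsky--Washnitzer cohomology, where explicit Griffiths--Dwork bases $\omega_{\mathbf{m}}$, $\tilde\omega_{\mathbf{m}}$ are available (Lemma~\ref{lemBasis}). A $G$-invariant form $\omega_{\mathbf{k}}$ on $V_\lambda$ corresponds on the torus to $\tilde\omega_{\mathbf{m}}$ with $\mathbf{m}=\mathbf{k}\frac{1}{d}A\,\diag(w_0,\dots,w_n)$, and since $A$ is nonnegative with no zero column and $\mathbf{k}$ has positive entries, all entries of $\mathbf{m}$ are positive; hence every $G$-invariant class extends from $\tilde U_\lambda^*$ to $\tilde U_\lambda$. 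Combined with Katz's injectivity of $H^n(V_\lambda)\to H^n(V_\lambda^*)$ for smooth $Y_\lambda$ (via Lemma~\ref{lemGenPos}) and Poincar\'e duality in rigid cohomology, this yields Proposition~\ref{prpFactor} (the space $H^n_c(V_\lambda)^G$ is a Frobenius-stable subspace of $H^n_c(U_\lambda)$, up to the $H$-invariants identification), from which Theorem~\ref{thmMainO} follows; Corollary~\ref{corMain} then uses purity of $H^n_c(U_{i,\lambda})$ for smooth $X_{i,\lambda}\subset\mathbf{P}^n$ to prevent cancellation, exactly as in your part~2. This positivity-of-exponents computation is the substitute for your missing step; note that it is native to a de Rham-type theory with explicit bases, so if you insist on $\ell$-adic cohomology you would need a different mechanism entirely to see that no invariant class is lost on the locus where the rational map is undefined.
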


In the second section we will prove this result under slightly weaker, but more technical hypothesis, see Theorem~\ref{thmMainO} and Corollary~\ref{corMain}. Moreover, in Proposition~\ref{prpFactorDiv} we will show that the factor constructed in the proof of Theorem~\ref{thmDKSSVW} divides the characteristic polynomial of Frobenius acting on $H^{n-1}(Y_\psi)^{G.G'}$. We will give examples where our factor has higher degree.

Note that the quotient map $Y_\psi\dashrightarrow X_{\psi}$ is a rational map. If it were a morphism then it is straightforward to show that the characteristic polynomial of $H^{n-1}(Y_{\psi})^{G.G'}$ divides the characteristic polynomial of Frobenius on $H^{n-1}(X_\psi)$. Hence, large part of the proof is dedicated to show that passing to the open where the rational map is a morphism does not kill any part of $H^{n-1}(Y_{\psi})^{G.G'}$.

In the course of the proof of Theorem~\ref{thmMain} we show that we can decompose $H^{n-1}(X_\psi)$ as a~direct sum of two Frobenius stable subspaces, namely
\begin{gather*} H^{n-1}(X_\psi)=H^{n-1}(Y_\psi)^G \oplus C.\end{gather*}
Similarly, we show that can decompose $H^{n-1}(Y_{\psi})^{G}= H^{n-1}(Y_{\psi})^{G_{\max}} \oplus W_\psi$, where $W_\psi$ is Frobenius stable, and $G_{\max}$ is the maximal group of torus automorphisms acting on the family $Y_\psi$.

The appearance of $C$ is related with the fact that the quotient map is only a rational map rather than a morphism. For most choices of $(a_0,\dots,a_n)$ we have that $C$ is independent of $\psi$ and in that case we can express $C$ in terms of the cohomology of cones over Fermat hypersurfaces. Hence the Frobenius action on~$C$ is easy to determine. To calculate the Frobenius action on the complementary subspace $H^{n-1}(Y_{\psi})^G$ we can use the methods from~\cite{zetafam} to express the zeta function in terms of generalised $p$-adic hypergeometric functions.

This brings us to another observation from \cite{DKSSVW}: In \cite[Section~5]{DKSSVW} the authors consider f\/ive families of quartic $K3$ surfaces which have a single common factor of the zeta function of degree~3. They show that every other zero of the characteristic polynomial of Frobenius on $H^2$ is of the form $q$ times a root of unity. Assuming the Tate conjecture for $K3$ surfaces (which is proven for most $K3$ surfaces anyway) we deduce that the (geometric) Picard number is at least~19.

This result is a special case of the following phenomena: if for a lift to characteristic zero $h^{n-1,0}\big(H^n(Y_\psi)^{G_{\max}}\big) =h^{n-1,0}\big(H^{n-1}(X_{\psi})\big)$ holds then it turns out that both $W_\psi$ and $C$ are Tate twists of Hodge structures of lower weight. In the $K3$ case, $W_\psi$ and $C$ are Hodge structures of pure $(1,1)$-type. By the Lefschetz' theorem on $(1,1)$-classes, they are generated by classes of divisors. In particular, for each of the f\/ive families the lifts to characteristic zero have Picard number at least~19, and since they form a~one-dimensional family the generic Picard group has rank~19.

In the second half of the paper we discuss how one can f\/ind a basis for a subgroup of f\/inite index of the generic Picard group for the f\/ive families from \cite{DKSSVW} and for f\/ive further monomial deformations of Delsarte quartic surfaces. For all ten families we determine $H^2(Y_\psi)^G$, $H^2(Y_\psi)^{G_{\max}}$ and $C$ as vector spaces with Frobenius action Moreover, we f\/ind curves generating $C$ in each of the ten cases. For two families we have that $W_{\psi}$ is zero-dimensional. For six of the remaining eight families we manage to f\/ind curves, whose classes in cohomology generate $W_{\psi}$.

In the next section we prove our generalisation of the result from \cite{DKSSVW}. In the third section we discuss the quartic surface case. In Appendix~\ref{appBit} we give explicit equations for bitangents to certain particular quartic plane curves. These equations can be used to f\/ind explicit curves, generating $W_{\psi}$.

\section{Delsarte hypersurface}

Fix an integer $n\geq 2$ and f\/ix a f\/inite f\/ield $\mathbf{F}_q$.
\begin{Definition}An invertible matrix $A:=(a_{i,j})_{0\leq i,j\leq n}$, such that all entries are nonnegative integers is called a \emph{coefficient matrix} if all entries of $A^{-1}(1,\dots,1)^{\rm T}$ are positive and each column of $A$ contains a zero.

In that case let $d$ be an integer such that $B:=dA^{-1}$ has integral coef\/f\/icients. We call $B$ the \emph{map matrix}. We call $B(1,\dots,1)^{\rm T}$ the \emph{weight vector}, which we denote by $\mathbf{w}:=(w_0,\dots,w_n)$.

A vector $\mathbf{a}:=(a_0,\dots,a_n)$ consisting of nonnegative integers such that $\sum\limits_{i=0}^n w_ia_i=d$ holds and such that all entries of $\mathbf{a} A^{-1} $ are nonnegative is called a \emph{deformation vector}.
\end{Definition}

\begin{Definition} Fix a pair $(A,\mathbf{a})$ consisting of coef\/f\/icient matrix and a deformation vector $\mathbf{a}$. Assume that $\gcd(q,d)=1$. Then we call $(A,\mathbf{a})$ \emph{Delsarte deformation data of length~$n$}.
\end{Definition}

Let $(A,\mathbf{a})$ be Delsarte deformation data of length $n$. Let
\begin{gather*} X_\lambda:=Z\left( \sum_{i=0}^n \prod_{j=0}^n x_j^{a_{i,j}} + \lambda \prod_{i=0}^n x_i^{a_i}\right)\end{gather*}
be the corresponding one-parameter family of hypersurfaces of weighted degree $d$ in the weighted projective space $\mathbf{P}(w_0,\dots,w_n)$.

Denote with $(b_0,\dots,b_n)$ the entries of $\mathbf{a} B $. Let $Y_{\lambda}$ be
\begin{gather*} Z\left(\sum_{i=0}^n y_i^d +\lambda \prod_{i=0}^n y_i^{b_i}\right) \subset \mathbf{P}^n.\end{gather*}

\begin{Remark} Our def\/inition of $\mathbf{w}$ may lead to choices of the $w_i$ such that the gcd of $(w_0,\dots,w_n)$ is larger than one. The choice of the $w_i$ is such that the weighted degree of the polynomial def\/ining $X_{\lambda}$ equals the degree of $Y_\lambda$.
\end{Remark}

We have a $(\mathbf{Z}/d\mathbf{Z})^{n}$-action on $\mathbf{P}^n$ induced by
\begin{gather*} (g_1,\dots,g_n)(x_0:x_1:\dots:x_n):=\big(x_0:\zeta^{g_1}x_1:\zeta^{g_2}x_2:\dots:\zeta^{g_n}x_n\big),\end{gather*}
with $\zeta$ a f\/ixed primitive $d$-th root of unity. The subgroup $G$ def\/ined by $\sum\limits_{i=1}^n g_i b_i\equiv 0 \bmod d$ acts on $Y_{\lambda}$.

The rational map $\mathbf{P}^n\dashrightarrow \mathbf{P}(\mathbf{w})$ given by
\begin{gather*} (y_0,y_1,\dots,y_n) \mapsto \left(\prod_{i=0}^n y_i^{b_{0,i}},\prod_{i=0}^n y_i^{b_{1,i}},\dots,\prod_{i=0}^n y_i^{b_{n,i}}\right)\end{gather*}
induces a rational map $Y_{\lambda}\dashrightarrow X_{\lambda}$. This rational map is Galois (i.e., the corresponding extension of function f\/ields is Galois) and the Galois group is a subgroup of~$G$.

In particular, if all the $b_{i,j}$ are nonnegative then this rational map is a morphism. (This map was used by Shioda \cite{ShiExpPic} to give an algorithm to calculate the Picard number of a Delsarte surface in~$\mathbf{P}^3$.)

\begin{Lemma}
The hypersurface $X_0$ is irreducible.
\end{Lemma}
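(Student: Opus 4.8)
The plan is to show that the defining polynomial $F_A=\sum_{i=0}^n\prod_{j=0}^n x_j^{a_{i,j}}$ is absolutely irreducible, from which irreducibility of $X_0$ follows. The main tool will be the Newton polytope together with Ostrowski's theorem: for any factorisation $F_A=GH$ over $\overline{\mathbf{F}}_q$ the Newton polytope of $F_A$ equals the Minkowski sum of the Newton polytopes of $G$ and~$H$.

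First I would identify the Newton polytope $\Delta$ of $F_A$. Its vertices are the exponent vectors $(a_{i,0},\dots,a_{i,n})$, that is, the rows of $A$. Since $A$ is invertible these $n+1$ rows are linearly independent over $\mathbf{Q}$, hence affinely independent, and because $F_A$ is homogeneous of weighted degree $d$ (indeed $A\mathbf{w}=dA A^{-1}(1,\dots,1)^{\rm T}=d(1,\dots,1)^{\rm T}$) they all lie on the affine hyperplane $\sum_j w_jX_j=d$. Thus $\Delta$ is an $n$-dimensional simplex. As $n\geq 2$, I would invoke the classical fact that a simplex of dimension at least two is Minkowski-indecomposable: in any decomposition $\Delta=P+Q$ one of $P$, $Q$ is a single point.

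Applying Ostrowski's theorem to $F_A=GH$, one of the factors, say $H$, then has a single-point Newton polytope and is therefore a monomial $cx^{\alpha}$. It remains to rule out $\alpha\neq 0$, and here the hypothesis that each column of $A$ contains a zero enters: if some $x_k$ divided $F_A$, then every monomial $\prod_j x_j^{a_{i,j}}$ would contain $x_k$, i.e.\ column $k$ of $A$ would have all entries positive, a contradiction. Hence $\alpha=0$, the factor $H$ is a constant, and $F_A$ is absolutely irreducible; in particular $X_0$ is irreducible.

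The only step requiring genuine care is the Minkowski-indecomposability of the simplex $\Delta$; I would either cite the standard result or give the short direct argument that any summand of $\Delta$, having normal fan coarsening that of $\Delta$, must be a point or homothetic to $\Delta$, so that a nontrivial decomposition is impossible once $\dim\Delta\geq 2$. I expect this to be the sole obstacle, the remaining steps being formal. (Alternatively one could try to argue that $X_0$ is dominated by the smooth, hence irreducible, Fermat hypersurface $Y_0=Z(\sum_i y_i^d)$ via the rational map introduced above, but verifying dominance onto all of $X_0$ is less immediate than the Newton polytope argument, so I would favour the latter.)
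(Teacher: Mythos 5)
There is a genuine gap, and it is exactly at the step you flagged as the only delicate one. The classical indecomposability theorem for simplices does \emph{not} say that in any Minkowski decomposition $\Delta=P+Q$ one summand is a point; it says that every summand of a simplex is a homothet of it, i.e., $P=\lambda\Delta+u$ and $Q=(1-\lambda)\Delta+v$ with $0\leq\lambda\leq 1$. Your own parenthetical formulation (``must be a point or homothetic to $\Delta$'') is the correct statement, but the conclusion you draw from it (``so that a nontrivial decomposition is impossible once $\dim\Delta\geq 2$'') does not follow, in any dimension. When $\Delta$ is a nontrivial lattice multiple of a smaller lattice simplex, the homothets $\lambda\Delta+u$ with $0<\lambda<1$ can themselves be lattice polytopes, and then Ostrowski's theorem rules out nothing. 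This is not a corner case in the present paper: the lemma must cover $A=dI_{n+1}$, i.e., the Fermat polynomial $\sum_i x_i^d$ itself (the first family in Fig.~1), whose Newton polytope is $d\Delta_n=k\Delta_n+(d-k)\Delta_n$ for every $0<k<d$. Worse, the approach cannot be repaired: a product of $d$ generic linear forms has the \emph{same} Newton polytope $d\Delta_n$ as the Fermat polynomial, so no argument using only the Newton polytope can distinguish the irreducible $\sum_i x_i^d$ from a reducible polynomial; some extra input (smoothness, or a geometric argument) is unavoidable. (Your use of the column-zero hypothesis to exclude monomial factors is correct, but that was the easy part.)

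The paper's proof is precisely the alternative you set aside as ``less immediate.'' It uses that $Y_0$ is the Fermat hypersurface, which is smooth (since $\gcd(q,d)=1$) and hence irreducible for $n\geq 2$; the column-zero condition is used not to kill monomial factors of a hypothetical factorisation but to show that no irreducible component of $X_0$ lies in a coordinate hyperplane, so each component meets the torus part $X_0^*$ densely; and $X_0^*$ is the image of the irreducible $Y_0^*$ under the morphism $Y_0^*\to X_0^*$, whose surjectivity was already recorded when the quotient maps were introduced. So every component of $X_0$ is the closure of the single irreducible set $X_0^*$, and $X_0$ is irreducible. In particular, the ``dominance onto all of $X_0$'' that worried you is not needed: dominance onto the dense torus part suffices, and that is available for free.
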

\begin{proof}
 Each column of $A$ contains a zero by the def\/inition of coef\/f\/icient matrix. Hence $x_k$ does not divide \begin{gather*}\sum_{i=0}^n \prod_{j=0}^n x_j^{a_{i,j}}\end{gather*}
for any $k$. Hence for every irreducible component of $X_0$ the points such that all coordinates are nonzero are dense, and these latter points are in the image of $Y_0$. This implies that every irreducible component of $X_0$ is the closure of an irreducible component of the image of $Y_0$. Since $n>1$ it follows that $Y_0$ is irreducible and hence $X_0$ is irreducible.
\end{proof}

\begin{Definition}
We call $X_0$ the \emph{Delsarte hypersurface} associated with $A$ and $X_\lambda$ the \emph{one-dimen\-sio\-nal monomial deformation} associated with $(A,\mathbf{a})$. If, moreover, $X_0$ is quasismooth then we call $X_0$ \emph{invertible hypersurface}.
\end{Definition}

\begin{Example} Consider
\begin{gather*} x_0^4+x_1^4+x_2^3x_3+x_3^3x_2+\lambda x_0x_1x_2x_3.\end{gather*}
Then
\begin{gather*} A=\left(\begin{matrix} 4&0&0&0\\0&4&0&0\\0&0&3&1\\0&0&1&3\end{matrix} \right) \qquad \text{and} \qquad \mathbf{a}=(1,1,1,1).\end{gather*}
We have that
\begin{gather*} B=\left(\begin{matrix} 2&0&0&0\\0&2&0&0\\0&0&3&-1\\0&0&-1&3\end{matrix} \right) \qquad \text{and} \qquad \mathbf{w}=(2,2,2,2). \end{gather*}
In particular, we have that this family is birational a quotient of
\begin{gather*} x_0^8+x_1^8+x_2^8+x_3^8+\lambda (x_0x_1x_2x_3)^2.\end{gather*}
The group $G$ is generated by the automorphisms
\begin{gather*} (x_0,x_1,x_2,x_3)\mapsto(x_0,-x_1,x_2,x_3)\qquad \text{and} \qquad
(x_0,x_1,x_2,x_3)\mapsto\big(x_0,x_1,\zeta^3 x_2,\zeta x_3\big),\end{gather*} with $\zeta$ a primitive $8$-th root of unity.
\end{Example}

\begin{Definition}\label{defGenPos} A hypersurface $X=V(f)\subset \mathbf{P}^n$ is \emph{in general position} if
\begin{gather*} V\left(x_0\frac{\partial}{\partial{x_0}} f, \dots,x_n\frac{\partial}{\partial{x_n}} f\right)\end{gather*}
is empty. Equivalently, $X$ is smooth and for any subset $\{i_1,\dots,i_c\} \subset \{0,1,\dots,n\}$ we have that
\begin{gather*} X \cap V(x_{i_1})\cap \dots \cap V(x_{i_c})\end{gather*}
is also smooth.
\end{Definition}

\begin{Lemma}\label{lemGenPos} If $Y_\lambda$ is smooth then $Y_{\lambda}$ is in general position.
\end{Lemma}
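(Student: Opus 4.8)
The plan is to verify the first characterization in Definition~\ref{defGenPos} directly: writing $f := \sum_{i=0}^n y_i^d + \lambda M$ with $M := \prod_{i=0}^n y_i^{b_i}$, I want to show that $V(y_0\partial_0 f,\dots,y_n\partial_n f)=\emptyset$ in $\mathbf{P}^n$ whenever $Y_\lambda=V(f)$ is smooth. The computational engine is the identity
\[ y_k\,\frac{\partial f}{\partial y_k} = d\,y_k^d + \lambda b_k M, \qquad k=0,\dots,n, \]
which holds uniformly, including when $b_k=0$ (in that case $M$ does not involve $y_k$ and both sides reduce to $d\,y_k^d$). I will also use that $\sum_{k=0}^n b_k=d$, which follows from $\mathbf{b}=\mathbf{a}B$ and the deformation-vector condition $\sum_i a_iw_i=d$, that $\gcd(q,d)=1$ forces $d\neq 0$ in the ground field, and Euler's relation $\sum_k y_k\partial_k f=d\,f$.

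First I would recall that, by Euler's relation together with $d\neq 0$, smoothness of $Y_\lambda$ is equivalent to $\partial_0 f,\dots,\partial_n f$ having no common zero in $\mathbf{P}^n$ (a common zero of all partials automatically lies on $Y_\lambda$). Then, arguing by contradiction, I suppose a point $P\in\mathbf{P}^n$ satisfies $y_k\partial_k f(P)=0$ for every $k$, and aim to exhibit a singular point of $Y_\lambda$. If $M(P)=0$, then every equation collapses to $d\,y_k^d=0$, forcing all coordinates to vanish, which is impossible in $\mathbf{P}^n$; hence $M(P)\neq 0$. This means $y_i(P)\neq 0$ for each index with $b_i\geq 1$, and for such an index the vanishing of $y_k\partial_k f$ with $y_k\neq 0$ yields $\partial_k f(P)=0$. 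For an index with $b_k=0$ the equation reads $d\,y_k^d=0$, so $y_k(P)=0$, and then $\partial_k f(P)=d\,y_k^{d-1}=0$ as well (here I use $d\geq 2$). Thus all partials vanish at $P$, and by Euler $f(P)=0$, so $P$ is a singular point of $Y_\lambda$, contradicting smoothness.

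The main (and essentially only) subtlety is the bookkeeping around which coordinates vanish at~$P$: the content of the lemma is precisely that multiplying each partial by $y_k$ could a~priori enlarge the zero locus beyond the singular set, and one must check that for this particular family it does not. The crucial structural input is that a zero coordinate can occur only at an index where $b_k=0$ (since $M(P)\neq 0$), and at such an index $\partial_k f$ already vanishes; the relation $\sum_k b_k=d$ is what guarantees $f$ is homogeneous so that Euler applies. I would flag the degenerate degree-one situation $d=1$ as the only place where the hypothesis $d\geq 2$ is genuinely used, and note that it corresponds to the excluded case in which $X_0$ is merely a hyperplane.
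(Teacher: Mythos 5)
Your proof is correct, but it follows a genuinely different route from the paper's. You verify the first characterization in Definition~\ref{defGenPos} directly: using the identity $y_k\,\partial f/\partial y_k = d\,y_k^d+\lambda b_k M$ together with $\gcd(q,d)=1$, you show that any point of $V\big(y_0\,\partial f/\partial y_0,\dots,y_n\,\partial f/\partial y_n\big)$ must satisfy $M\neq 0$, hence is a common zero of all the honest partials, hence (by Euler's relation) a singular point of $Y_\lambda$ --- contradiction. The paper instead verifies the second, equivalent, characterization: it intersects $Y_\lambda$ with each coordinate subspace $x_{i_1}=\dots=x_{i_c}=0$ and checks the slice is smooth, distinguishing the case where some $b_{i_j}\neq 0$ (the deformation monomial dies and the slice is a lower-dimensional Fermat hypersurface, smooth because $\gcd(q,d)=1$) from the case where all $b_{i_j}=0$ (where singular points of the slice are matched with singular points of $Y_\lambda$ itself). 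Both arguments use the same inputs --- $\gcd(q,d)=1$ and the structure of the monomial $\prod y_i^{b_i}$ --- and are of comparable length; yours is uniform (a single pointwise computation, no case analysis over subsets of indices) and proves the definition in the form in which it is stated, while the paper's argument directly produces the slice-smoothness statement in the form in which it is later used (e.g., to see that $V_\lambda^*$ is the complement of a normal crossing divisor). Your caveat about $d\geq 2$ is accurate and not vacuous: for $d=1$ and $\lambda=-1$ the hyperplane $Y_\lambda$ is smooth but fails to be in general position, and the paper's own argument silently uses $d\geq 2$ at the analogous spot (it needs the partials $d\,x_i^{d-1}$ to vanish along $x_i=0$ to match singular points of the slice with singular points of $Y_\lambda$).
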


\begin{proof} Suppose we intersect $Y_{\lambda}$ with $x_{i_1}=\dots=x_{i_c}=0$. If some $b_{i_j}$ is nonzero then the inter\-section is a Fermat hypersurface in $\mathbf{P}^{n-c}$ and is smooth. If all $b_{i_j}$ are zero then we can do the following: After a change of coordinates we may assume that $\{i_1,\dots,i_{c}\}=\{0,1,\dots,c-1\}$. We now have that $Y_{\lambda}$ is the zero set of
\begin{gather*} \sum_{i=0}^{c-1} x_i^d+h(x_c,\dots,x_n)\end{gather*}
for some $h\in \mathbf{F}_q[x_c,\dots,x_n]$. From $\gcd(q,d)=1$ it follows that the singular points of the intersection $V(x_0,\dots,x_{c-1},h)$ are in one-to-one correspondence with the singular points of $Y_\lambda$. Hence $V(x_0,\dots,x_{c-1},h)$ is smooth.
\end{proof}

Recall that we started with a hypersurface $X_\lambda \subset \mathbf{P}(\mathbf{w})$ and constructed a hypersurface $Y_{\lambda}\subset \mathbf{P}^n$, such that $X_\lambda$ is birational to a quotient of $Y_\lambda$. Denote with $U_{\lambda}:=\mathbf{P}(\mathbf{w})\setminus X_\lambda$ and $V_{\lambda}:=\mathbf{P}^n\setminus Y_\lambda$ be the respective complements.

Denote now with $(\mathbf{P}(\mathbf{w}))^*$, $U_{\lambda}^*$, $V_{\lambda}^*$, $X_{\lambda}^*$, $Y_\lambda^*$, etc. the original variety minus the intersection with $Z(x_0\dots x_n)$ or $Z(y_0 \dots y_n)$, the union of the coordinate hyperplanes. We have that the quotient map $\mathbf{P}^n \dashrightarrow \mathbf{P}(\mathbf{w})$ def\/ines surjective morphisms $(\mathbf{P}^n)^*\to \mathbf{P}(\mathbf{w})^*$, $Y_\lambda^*\to X_{\lambda}^*$, $V_\lambda^*\to U_\lambda^*$.

There is a second quotient map $\mathbf{P}^n \to \mathbf{P}(\mathbf{w})$ given by $(z_0:\dots:z_n) \to (z_0^{w_0}:\dots:z_n^{w_n})$. This map is a morphism and is a ramif\/ied Galois covering. Denote with $H$ the corresponding Galois group. Let $\tilde{X}_{\lambda}$ be the pull back of $X_\lambda$ and let $\tilde{U}_{\lambda}$ be the pull back of $U_{\lambda}$.

Fix now a lift $\mu\in \mathbf{Q}_q$ of $\lambda$. Then we can def\/ine $F_{\mu}$, $\tilde{U}_{\mu}$, $V_{\mu}$, $\tilde{ X}_{\mu}$, $Y_{\mu}$ similarly as above. If $y_0,\dots,y_n$ are projective coordinates on $\mathbf{P}^n$ then let $\Omega$ be
\begin{gather*} \left(\prod_{i=0}^n y_i\right) \left(\sum_{i=0}^n (-1)^{i} \frac{dy_0}{y_0} \wedge \dots \wedge \widehat{\frac{dy_i}{y_i} }\wedge\dots \wedge\frac{dy_n}{y_n} \right).\end{gather*}
We recall now some standard notation used to study the cohomology of a hypersurface complement in $\mathbf{P}^n$.

\begin{notation}Let $\mathbf{m}=(m_0,\dots,m_n)$ be $(n+1)$-tuple of positive integers, such that $\sum\limits_{i=0}^n m_i =td$ for some positive integer~$t$. Then
\begin{gather*} \tilde{\omega}_{\mathbf{m}}:= \frac{\prod\limits_{i=0}^n x_i^{m_i-1}}{(F_{A,\mu}(x_0^{w_0},\dots,x_n^{w_n}))^t} \Omega\end{gather*}
is an $n$-form on the complement $\tilde{U}_{\mu}$ of $\tilde{X}_{\mu}$. If we allow the $m_i$ and $t$ to be arbitrary integers such that the equality $\sum\limits_{i=0}^n m_i=td$ holds then $\tilde{\omega}_{\mathbf{m}}$ is a form on $\tilde{U}^*_{\mu}$.

Let $\mathbf{m}=(m_0,\dots,m_n)$ be $(n+1)$-tuple of positive integers, such that $\sum\limits_{i=0}^n m_i =td$ holds for some positive integer $t$. Let $D$ be the diagonal matrix $d I_{n+1}$. Then
\begin{gather*} \omega_{\mathbf{m}}:= \frac{\prod\limits_{i=0}^n y_i^{m_i-1}}{F_{D,\mu}^t} \Omega\end{gather*}
is an $n$-form on the complement $V_{\mu}$ of $Y_{\mu}$. If we allow the $m_i$ and $t$ to be arbitrary integers such that the equality $\sum\limits_{i=0}^n m_i=td$ then $\omega_{\mathbf{m}}$ is a form on~$V^*_{\mu}$.
\end{notation}

The following result seems to be known to the experts, but we include it for the reader's convenience:

\begin{Lemma}\label{lemBasis} There exists a finite set $S\subset \mathbf{Q}_q$ such that $0\not \in S$ and for all $\mu\in \mathbf{Q}_q\setminus S$ we have that
\begin{gather*}\mathcal{B}:=\left \{\omega_{\mathbf{m}} \colon 0<m_i<d \mbox{ for } i=0,\dots, n \ \text{and} \ \sum_{i=0}^n m_i\equiv 0 \bmod d\right \}\end{gather*}
is a basis for $H^n_{\dR}(V_\mu,\mathbf{Q}_q)$.

Similarly, there exists a finite set $S^*$ such that $0\not \in S^*$ and for all $\mu\in\mathbf{Q}_q\setminus S^*$ we have that
\begin{gather*}\mathcal{B}^*:=\left \{\omega_{\mathbf{m}} \colon 0\leq m_i<d \mbox{ for } i=0,\dots, n \ \text{and} \ \sum_{i=0}^n m_i\equiv 0 \bmod d\right\}\end{gather*}
is a basis for $H^n_{\dR}(V_\mu^*,\mathbf{Q}_q)$.
\end{Lemma}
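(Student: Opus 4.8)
The plan is to identify $\mathcal B$ and $\mathcal B^*$ with monomial bases of suitable Jacobian rings through the Griffiths--Dwork description of the cohomology of a hypersurface complement, to check the claim by hand at the Fermat member $\mu=0$, and then to propagate it to a generic $\mu$ by an elementary openness argument. Concretely, for smooth $Y_\mu$ the forms $P\,\Omega/F_{D,\mu}^t$ with $P$ homogeneous of degree $td-(n+1)$ span $H^n_{\dR}(V_\mu,\mathbf{Q}_q)$, and under the pole-order filtration one has $\mathrm{gr}^t\cong R(F_{D,\mu})_{td-(n+1)}$, where $R(F_{D,\mu})=\mathbf{Q}_q[y_0,\dots,y_n]/J$ is the Jacobian ring with $J=(\partial F_{D,\mu}/\partial y_0,\dots,\partial F_{D,\mu}/\partial y_n)$, the class of $P\,\Omega/F^t$ mapping to $[P]$. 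Since $\prod_i y_i^{m_i-1}$ has degree $\sum_i m_i-(n+1)$, the members of $\mathcal B$ are precisely the forms of pole order $t=\tfrac1d\sum_i m_i\in\mathbf{Z}_{>0}$ with numerator $\prod_i y_i^{m_i-1}$ and $0<m_i<d$. Because a family of filtered vectors is a basis as soon as its images form a basis of the associated graded, $\mathcal B$ is a basis of $H^n_{\dR}(V_\mu)$ iff for every $t$ the monomials $\prod_i y_i^{m_i-1}$ with $0<m_i<d$ and $\sum_i m_i=td$ form a basis of $R(F_{D,\mu})_{td-(n+1)}$.

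To control the dimensions uniformly in $\mu$ I would use that smoothness of $Y_\mu$ means the $n+1$ degree-$(d-1)$ partials $\partial F_{D,\mu}/\partial y_i$ have no common zero in $\mathbf{P}^n$, hence form a regular sequence, so $R(F_{D,\mu})$ is a graded complete intersection whose Hilbert series $\big((1-T^{d-1})/(1-T)\big)^{n+1}$ does not depend on the smooth member. In particular $\dim R(F_{D,\mu})_{td-(n+1)}$ equals the number of $\mathbf m\in\mathcal B$ with $\sum_i m_i=td$, so in each degree spanning is equivalent to being a basis. At $\mu=0$ one has $F_{D,0}=\sum_i y_i^d$, whence $J=(y_0^{d-1},\dots,y_n^{d-1})$ and $R(F_{D,0})=\mathbf{Q}_q[y]/(y_i^{d-1})$ has the monomial basis $\prod_i y_i^{c_i}$ with $0\le c_i\le d-2$; the substitution $c_i=m_i-1$ shows $\mathcal B$ maps onto exactly this basis, so $\mathcal B$ is a basis at $\mu=0$. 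Finally, writing $P_k$ for the degree-$k$ homogeneous polynomials and $W_t\subset P_{td-(n+1)}$ for the span of the $\mathcal B$-monomials of that degree, the condition that these span $R(F_{D,\mu})_{td-(n+1)}$ is the surjectivity of the linear map $W_t\oplus\bigoplus_i P_{td-n-d}\to P_{td-(n+1)}$, $(w,(g_i))\mapsto w+\sum_i g_i\,\partial F_{D,\mu}/\partial y_i$, between fixed finite-dimensional $\mathbf{Q}_q$-spaces whose matrix is polynomial in $\mu$; this is Zariski-open and nonempty, as it holds at $\mu=0$. Letting $S$ be the finite union of the singular values of $Y_\mu$ and the finitely many $\mu$ where some relevant maximal minor vanishes gives $0\notin S$ and the first assertion for $\mu\notin S$.

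The case of $\mathcal B^*$ and $V_\mu^*=\mathbf{P}^n\setminus(Y_\mu\cup\{y_0\cdots y_n=0\})$ is entirely parallel, with $J$ replaced by the logarithmic Jacobian ideal $J^*=(y_0\,\partial F_{D,\mu}/\partial y_0,\dots,y_n\,\partial F_{D,\mu}/\partial y_n)$: now the $\omega_{\mathbf m}$ with $0\le m_i<d$ span $H^n_{\dR}(V_\mu^*)$, $\mathrm{gr}^t$ is the degree-$td$ piece of $R^*(F_{D,\mu})=\mathbf{Q}_q[y]/J^*$, and $\omega_{\mathbf m}$ maps to $\prod_i y_i^{m_i}$. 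Here the key input is that ``$Y_\mu$ is in general position'' (Definition~\ref{defGenPos}), which by Lemma~\ref{lemGenPos} holds whenever $Y_\mu$ is smooth, says exactly that the $n+1$ degree-$d$ forms $y_i\,\partial F_{D,\mu}/\partial y_i$ have no common zero in $\mathbf{P}^n$; they therefore form a regular sequence and $R^*(F_{D,\mu})$ is again a complete intersection with $\mu$-independent Hilbert series $\big((1-T^{d})/(1-T)\big)^{n+1}$. At $\mu=0$ one has $y_i\,\partial F_{D,0}/\partial y_i=d\,y_i^d$, so $R^*(F_{D,0})=\mathbf{Q}_q[y]/(y_i^d)$ has monomial basis $\prod_i y_i^{c_i}$ with $0\le c_i\le d-1$, matching $\mathcal B^*$, and the same openness argument yields the finite set $S^*$ with $0\notin S^*$.

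The main obstacle is the correct set-up of the two Griffiths--Dwork isomorphisms --- especially the \emph{logarithmic} variant on the torus needed for $\mathcal B^*$ --- together with the check that the $\omega_{\mathbf m}$ genuinely represent the asserted classes in the associated graded. The only input beyond formal linear algebra is the complete-intersection property: for $\mathcal B$ it comes from smoothness of $Y_\mu$, whereas for $\mathcal B^*$ it is precisely the general-position hypothesis of Definition~\ref{defGenPos} supplied by Lemma~\ref{lemGenPos}, which is exactly why that lemma is established beforehand. Once the Hilbert functions are seen to be constant across smooth members, upgrading ``basis at $\mu=0$'' to ``basis off a finite set'' is the routine semicontinuity step above.
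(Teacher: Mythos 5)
Your treatment of the first statement (the set $\mathcal{B}$ and $H^n_{\dR}(V_\mu)$) is correct and is in substance the paper's own proof: both check the claim at the Fermat member $\mu=0$ --- you through the Jacobian ring $\mathbf{Q}_q[y]/(y_0^{d-1},\dots,y_n^{d-1})$ and its Hilbert series, the paper through the reduction relations (\ref{eqnRed}), (\ref{eqnRedb}) together with Griffiths' theorem that these generate all relations --- and both then spread the conclusion to all but finitely many $\mu$ by a constancy-of-dimension/openness argument.

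The second statement is where your proposal has a genuine gap. Everything rests on the asserted ``logarithmic'' Griffiths--Dwork isomorphism $\mathrm{gr}^t H^n_{\dR}(V^*_\mu)\cong\big(\mathbf{Q}_q[y]/J^*\big)_{td}$ with $J^*=(y_0\partial F/\partial y_0,\dots,y_n\partial F/\partial y_n)$. You do not prove it, and it is false. It would force
\begin{gather*}
\dim H^n_{\dR}(V^*_\mu)=\sum_{t\geq 0}\dim\big(\mathbf{Q}_q[y]/J^*\big)_{td}=d^n=\#\mathcal{B}^*,
\end{gather*}
independently of $\mu$, and this number is too small. Take $n=2$, $d=3$, $\mu=0$, so that $V^*_0$ is the complement in $\mathbf{P}^2$ of the divisor $D$ consisting of the Fermat cubic and the three coordinate lines. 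Then $D$ has four irreducible components and twelve nodes, so $\chi(V^*_0)=\chi(\mathbf{P}^2)-\chi(D)=3-(0+2+2+2-12)=9$; moreover $h^0(V^*_0)=1$ and $h^1(V^*_0)=3$ (one logarithmic class per component of $D$, one relation), hence $h^2_{\dR}(V^*_0)=9-1+3=11=d^n+2>\#\mathcal{B}^*$. The complete intersection $\mathbf{Q}_q[y]/J^*$ computes the twisted (Dwork/Adolphson--Sperber) cohomology of the torus attached to $F$, which is not the de~Rham cohomology of the complement $V^*_\mu$; the difference is carried by classes such as $\omega_{(d,0,\dots,0)},\dots,\omega_{(0,\dots,0,d)}$, whose numerators $y_i^d$ die in your graded ring but whose classes do not lie in the span of $\mathcal{B}^*$. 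In the example above these supply exactly the two missing dimensions, since $\sum_i\omega_{d\mathbf{e}_i}=\omega_{(0,\dots,0)}$ at $\mu=0$.

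This is not a defect you could have patched, because the paper's own argument stumbles at the very same forms: its reduction procedure raises exponents that are at most $-2$ and lowers exponents via (\ref{eqnRedb}), but to lower the entry $d$ of $\omega_{(d,0,\dots,0)}$ one would have to divide by the pole order $t=0$ of the target, while the remaining entries (equal to $0$, i.e., exponent $-1$) are precisely those that cannot be raised. So these classes are never reduced into $\mathcal{B}^*$, and by the dimension count they cannot be; the claim that $\mathcal{B}^*$ generates $H^n_{\dR}(V_0^*)$, and with it the second half of the lemma as literally stated, fails. What does survive --- and is what the sequel actually uses, e.g., in the proof of Proposition~\ref{prpFactor}, where the injectivity of $H^n(V_\lambda)\to H^n(V^*_\lambda)$ comes from the Katz argument in the remark following Theorem~\ref{thmMainO} --- is the weaker assertion that the classes of $\mathcal{B}\subset\mathcal{B}^*$ remain independent in $H^n_{\dR}(V^*_\mu)$ and that the image of $H^n_{\dR}(V_\mu)$ lies in the span of $\mathcal{B}^*$. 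Any correct statement involving all of $\mathcal{B}^*$ must concern that span (equivalently, the degree-zero part of the Dwork cohomology), not the full group $H^n_{\dR}(V^*_\mu)$.
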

\begin{proof} The forms $\omega_{\mathbf{m}}$, such that $m_i \geq 1$ for $i=0,\dots,n$ generate the de Rham cohomology group $H^n_{\dR}(V_\mu)$.
By dif\/ferentiating certain particular $(n-1)$-forms on $V_\mu$ we have that the following relation in $H^n_{\dR}(V_\mu)$
\begin{gather}\label{eqnRed} \frac{G_{y_i}}{F^t} \Omega = \frac{t G F_{y_i}}{F^{t+1}} \Omega \end{gather}
for any form $G\in \mathbf{Q}_q[y_0,\dots,y_n]_{td-n}$. (This is the so-called Grif\/f\/iths--Dwork method to reduce forms in cohomology.)

For $\mu=0$ we have that $F_{y_i}=dy_i^{d-1}$. Using (\ref{eqnRed}) we f\/ind the relation \begin{gather} \label{eqnRedb} \frac{y_0^{m_0} G(y_1,\dots,y_n)}{F_0^{t+1}}\Omega = \frac{(m_0-d+1)y_0^{m_0-d} G(y_1,\dots,y_n)}{t F_0^t} \Omega\end{gather}
and similar relations for the other $y_i$. In this way we can reduce forms such that all exponents are at least $0$ and at most $d-1$. However, if an exponent equals $d-1$ then this relation yields that the class is zero in cohomology. In particular, the $\omega_{\mathbf{m}}$ with $0<m_i<d$ for $i=0,\dots, n$ and $\sum\limits_{i=0}^n m_i\equiv 0 \bmod d$ generate $H^n_{\dR}(V_0)$. Grif\/f\/iths \cite{GriRat} showed that the relations of type~(\ref{eqnRed}) generate all relations and hence $\mathcal{B}$ is a basis for $H^n_{\dR}(V_0)$. If $X_\mu$ is smooth then the dimension of $H^n_{\dR}(V_\mu)$ is independent of $\mu$ and it is then straightforward to check that there are at most f\/initely many choices of $\mu$ for which $X_\mu$ is smooth and $\mathcal{B}$ is not a basis for $H^n_{\dR}(V_\mu)$.

We now prove the statement on $H^n_{\dR}(V_\mu^*)$. Note that if $X_\mu$ is smooth then by Lemma~\ref{lemGenPos} it is in general position. Therefore the dimension of $H^n_{\dR}(V_\mu^*)$ is independent of $\mu$. Hence it suf\/f\/ices to show that $\mathcal{B}^*$ is a basis for $H^n_{\dR}(V_0^*)$. Again we have relations of type~(\ref{eqnRed}), but now we may take $G\in \mathbf{Q}_q\big[y_0,y_0^{-1},\dots,y_n,y_n^{-1}\big]_{td-n}$. If the exponent of a~variable is at most $-2$ then we can use the relations of the shape~(\ref{eqnRedb}) to increase the exponent of this variable. However, if the exponent equals $-1$ we cannot do this, because then we would have to divide by zero in~(\ref{eqnRedb}). In this way we obtain that $\mathcal{B}^*$ generates $H^n_{\dR}(V^*_0)$. Moreover, as in the above case there are no further relations and~$\mathcal{B}^*$ is a basis.
\end{proof}

\begin{Remark} The $\mu$ for which $\mathcal{B}$ is not a basis can be determined by the methods of \cite[Section~3]{PanTui}.
\end{Remark}

\begin{Remark}Denote with $H^n_{\MW}(V_{\lambda})$ the $n$-th Monsky--Washnitzer cohomology of $V_\lambda$. The Monsky--Washnitzer cohomology is essentially the cohomology of the tensor product of de~Rham complex of a lift of $V_{\lambda}$ to characteristic zero with a weakly complete f\/initely generated algebra~$A^{\dagger}$. The Frobenius action on the cohomology is induced by a lift of Frobenius to $A^{\dagger}$. For more details see \cite[Theorem~2.4.5]{PutMW}. In that paper it is shown that two dif\/ferent lifts of $V_{\lambda}$ yield isomorphic complexes and two choices of lifts of Frobenius yield homotopic maps on the complexes. In particular, $H^n_{\MW}(V_\lambda)$ is independent of the choices made.

Let $\mu\in \mathbf{Q}_q$ be a lift of $\lambda$. One choice of a lift of $V_\lambda$ to characteristic zero is $V_{\mu}$, and the construction of the Monsky--Washnitzer cohomology yields a natural map $H^n_{\dR}(V_\mu)\to H^n_{\MW}(V_\lambda)$ of $\mathbf{Q}_q$-vector spaces. If $Y_\lambda$ is smooth then the is an isomorphism by \cite{BalChi}. Since $V_\lambda \subset \mathbf{P}^n$ is af\/f\/ine and smooth we have an isomorphism $H^n_{\MW}(V_\lambda)\cong H^n_{\rig}(V_\lambda)$ by~\cite{BerSMF}, where the latter group is rigid cohomology. Since there are inf\/initely many lifts~$\mu$ of $\lambda$ we can always choose a lift~$\mu$ such that~$\mathcal{B}$ is a basis for $H^n_{\dR}(V_\mu)$ and thereby yielding a basis for $H^n_{\MW}(V_{\lambda})$.

If $Y_\lambda$ is smooth then using Lemma~\ref{lemGenPos} we f\/ind that $V_\lambda^*$ is the complement of a normal crossing divisor. In particular, we can apply \cite{BalChi} and f\/ind a natural isomorphism $H^n_{\MW}(V_\lambda^*)\cong H^n_{\dR}(V_\mu^*)$. As above, we have an isomorphism $H^n_{\MW}(V_\lambda^*)\cong H^n_{\rig}(V_\lambda^*)$ and we identif\/ied a basis for $H^n_{\rig}(V_\lambda^*)$.
\end{Remark}

\begin{Remark} The action of $G$ lifts to characteristic zero. The forms $\omega_{\mathbf{m}}$ are eigenvectors for~$g^*$ each element $g\in G$. Hence the $G$-invariant ones span $H^n_{\MW}(V_\lambda)^G$.

If $Y_{\lambda}$ is singular then by the def\/inition of Monsky--Washnitzer cohomology we have that $H^n_{\MW}(V_\lambda)$ is generated by expressions
\begin{gather*} \sum_{\mathbf{m}=(m_0,\dots,m_n),m_i\geq 1} a_{\mathbf{m}} \omega_{\mathbf{m}}\end{gather*}
such that there exists $c_1,c_2 \in \mathbf{Q}$ with $c_1>0$ and $v(a_{\mathbf{m}})\geq c_1\big(\sum\limits_{i=0}^n m_i\big)+c_2$. The space $H^n_{\MW}(V_\lambda^*)$ is generated by expression
\begin{gather*} \sum_{\mathbf{m}=(m_0,\dots,m_n),m_i\geq -N} a_{\mathbf{m}} \omega_{\mathbf{m}}\end{gather*}
such that there exists $c_1,c_2 \in \mathbf{Q}$ with $c_1>0$ and $v(a_{\mathbf{m}})\geq c_1\big(\sum\limits_{i=0}^n m_i\big)+c_2$. The $G$-invariant subspaces are generated by similar sums, but in which only the $G$-invariant~$\omega_{\mathbf{m}}$ occur.
\end{Remark}

\begin{Remark} If one wants to study the Frobenius matrix by using the dif\/ferential equations, like in~\cite{Katz} or in~\cite{DKSSVW} then one needs to be more careful in lifting $V_\lambda$ to characteristic zero. In~\cite{Katz} one has to take~$\mu$ to be the Teichm\"uller lift of~$\lambda$. The reason for this, is that a priori Frobenius maps $H^i\big(U_\mu^q\big)$ to $H^i(U_\mu)$. To have an operator on $H^n(U_\mu)$ we need that $\mu^q=\mu$. If one works directly with Frobenius on Monsky--Washnitzer chomology then this constraint on $\mu$ does not exist.
\end{Remark}

From now on we use $H^i$ and $H^i_c$ to indicate rigid cohomology respectively rigid cohomology with compact support.

By \cite[Proposition 2.1]{Zetafamrec} we have canonical isomorphisms
\begin{gather*} H^i_c(U_\lambda^*)\cong H^i_c(V_\lambda^*)^G \qquad \text{and} \qquad H^i_c(U_\lambda^*)\cong H^i_c\big(\tilde{U}_\lambda^*\big)^H.\end{gather*}
We want to compare the cohomology of $H^n_c(V_\lambda)^G$ with the cohomology of~$H^n_c(U_{\lambda})$. However, $U_\lambda$ may be singular, hence we work with $H^n_c\big(\tilde{U}_\lambda\big)^ H$ instead. Using Poin\-car\'e duality it suf\/f\/ices to compare $H^n(V_\lambda)^ G$ with $H^n\big(\tilde{U}_\lambda\big)^H$ instead. Since both varieties are smooth and af\/f\/ine we can identify their rigid cohomology groups with their Monsky--Washnitzer cohomology groups. We will do this in order to prove:

\begin{Proposition}\label{prpFactor} Suppose that $H^n(V_\lambda)\to H^n(V_{\lambda}^*)$ is injective. Then $H^n(V_{\lambda})^G$ is a quotient of $H^n\big(\tilde{U}_{\lambda}\big)^H$. In particular, the characteristic polynomial of Frobenius acting on $H^n_c(V_{\lambda})^G$ is in~$\mathbf{Q}[T]$ and divides the characteristic polynomial of Frobenius acting on~$H^n_c(U_{\lambda})$.
\end{Proposition}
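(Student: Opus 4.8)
The plan is to establish the cohomological quotient statement and then read off the divisibility of characteristic polynomials by Poincar\'e duality. Both $V_\lambda^*$ and $\tilde U_\lambda^*$ are smooth affine of dimension $n$ (open in $\mathbf{P}^n$), so Poincar\'e duality applies, and since $G$ and $H$ are finite and the coefficients have characteristic zero, taking invariants commutes with dualizing. Dualizing the isomorphisms $H^n_c(U_\lambda^*)\cong H^n_c(V_\lambda^*)^G$ and $H^n_c(U_\lambda^*)\cong H^n_c(\tilde U_\lambda^*)^H$ recalled above therefore yields pullback isomorphisms $\pi_V^*\colon H^n(U_\lambda^*)\xrightarrow{\sim}H^n(V_\lambda^*)^G$ and $\pi_{\tilde U}^*\colon H^n(U_\lambda^*)\xrightarrow{\sim}H^n(\tilde U_\lambda^*)^H$, where $\pi_V\colon V_\lambda^*\to U_\lambda^*$ is the Delsarte quotient and $\pi_{\tilde U}\colon \tilde U_\lambda\to U_\lambda$ is the power map; set $\Phi:=\pi_{\tilde U}^*\circ(\pi_V^*)^{-1}$. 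I claim it suffices to prove that $H^n(V_\lambda)^G$ is a quotient of $H^n(\tilde U_\lambda)^H$: dualizing such a surjection exhibits $H^n_c(V_\lambda)^G$ as a Frobenius-stable subspace of $H^n_c(\tilde U_\lambda)^H\cong H^n_c(U_\lambda)$, and since $G$ is defined over $\mathbf{F}_q$ the space $H^n_c(V_\lambda)^G$ is a Frobenius module with characteristic polynomial in $\mathbf{Q}[T]$, which then divides that of $H^n_c(U_\lambda)$.

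Write $j_V\colon V_\lambda^*\hookrightarrow V_\lambda$, $j_{\tilde U}\colon \tilde U_\lambda^*\hookrightarrow \tilde U_\lambda$ and $j_U\colon U_\lambda^*\hookrightarrow U_\lambda$ for the open immersions, and $j_V^*,j_{\tilde U}^*,j_U^*$ for the induced restriction maps on $H^n$. The power map $\pi_{\tilde U}$ is a genuine morphism of the whole complements, so it commutes with the $j$'s; contravariant functoriality then gives a commuting square, and restricting to $H$-invariants (where $\pi_{\tilde U}^*$ is an isomorphism onto invariants, both before and after removing the coordinate hyperplanes) yields
\[
\mathrm{im}\big(j_{\tilde U}^*\colon H^n(\tilde U_\lambda)^H\to H^n(\tilde U_\lambda^*)^H\big)=\pi_{\tilde U}^*\big(\mathrm{im}(j_U^*)\big).
\]
On the $V$-side this mechanism fails, because the Delsarte quotient only extends to a \emph{rational} map $V_\lambda\dashrightarrow U_\lambda$ over the coordinate hyperplanes; this is precisely the difficulty flagged in the introduction. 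Here the hypothesis enters: injectivity of the $G$-equivariant map $H^n(V_\lambda)\to H^n(V_\lambda^*)$ identifies $H^n(V_\lambda)^G$ with $\mathrm{im}(j_V^*)\subseteq H^n(V_\lambda^*)^G$. The entire statement thus reduces to the single claim
\[
\pi_V^*\big(\mathrm{im}(j_U^*)\big)=\mathrm{im}\big(j_V^*\colon H^n(V_\lambda)^G\to H^n(V_\lambda^*)^G\big)\quad\text{in }H^n(V_\lambda^*)^G.
\]
Granting it, the identity $\Phi\circ\pi_V^*=\pi_{\tilde U}^*$ gives $\Phi\big(\mathrm{im}(j_V^*)\big)=\pi_{\tilde U}^*\big(\mathrm{im}(j_U^*)\big)=\mathrm{im}(j_{\tilde U}^*)$, so $H^n(V_\lambda)^G\cong\mathrm{im}(j_{\tilde U}^*)=H^n(\tilde U_\lambda)^H/\ker(j_{\tilde U}^*)$ is the required quotient.

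Proving this claim is the heart of the matter and the main obstacle. I would attack it through the explicit Griffiths--Dwork representatives of Lemma~\ref{lemBasis}: for a suitable lift $\mu$ the space $H^n(V_\mu^*)$ is spanned by the forms $\omega_{\mathbf m}$ with $0\le m_i<d$, the subspace $\mathrm{im}(j_V^*)$ coming from $H^n(V_\mu)$ is spanned by those with all $m_i\ge1$, and $\pi_V^*$ carries the torus classes on $U_\lambda^*$ to the $G$-invariant $\omega_{\mathbf m}$. The content of the claim is then that, under the identification $\Phi\colon H^n(V_\lambda^*)^G\cong H^n(\tilde U_\lambda^*)^H$, the \emph{interior} classes (all exponents at least one) on the two different covers $V_\lambda$ and $\tilde U_\lambda$ of $U_\lambda$ correspond to one another; equivalently, that a torus class on $U_\lambda^*$ extends across the coordinate hyperplanes on $V_\lambda$ if and only if it extends across them on $\tilde U_\lambda$. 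This is a bookkeeping statement about pole orders along $Z(x_0\cdots x_n)$, governed by the lattice data $B=dA^{-1}$, the weights $\mathbf w$ and the groups $G$, $H$, and it is genuinely delicate because the two covers ramify differently over the coordinate hyperplanes. When $Y_\lambda$ is singular, so that Lemma~\ref{lemBasis} no longer supplies a basis, I would run the same argument with the Monsky--Washnitzer representatives and their growth conditions $v(a_{\mathbf m})\ge c_1\big(\sum_i m_i\big)+c_2$ from the preceding remarks; the injectivity hypothesis is exactly what guarantees that the interior classes survive restriction and span $\mathrm{im}(j_V^*)$ without collapsing. Once this correspondence of interior classes is in place, the reduction of the previous paragraph closes the proof.
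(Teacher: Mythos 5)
Your duality reduction reproduces the first half of the paper's argument correctly, but the claim you label ``the heart of the matter'' and then only sketch a plan for is the \emph{entire} remaining content of the paper's proof, so the proposal has a genuine gap. Moreover, you have both overstated and mischaracterized what must be shown. Only one inclusion is needed: every class of $H^n(V_\lambda)^G$, viewed inside $H^n\big(\tilde{U}^*_\lambda\big)^H$, lies in the image of $H^n\big(\tilde{U}_\lambda\big)^H$. Your ``if and only if'' formulation (a torus class extends across the coordinate hyperplanes on $V_\lambda$ precisely when it extends on $\tilde{U}_\lambda$) is stronger than necessary, and the unneeded converse is in fact doubtful, because $B=dA^{-1}$ may have negative entries: an $H$-invariant form $\tilde{\omega}_{\mathbf{m}}$ with all $m_i>0$ corresponds to $\omega_{\mathbf{m}_0B}$ with $\mathbf{m}_0=\mathbf{m}\diag(w_0,\dots,w_n)^{-1}$, and $\mathbf{m}_0B$ can have nonpositive entries. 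More importantly, the inclusion that \emph{is} needed is not the ``genuinely delicate'' pole-order bookkeeping you describe; it is a two-line positivity computation, and this is exactly the paper's key step: $\omega_{\mathbf{k}}$ is $G$-invariant if and only if $\mathbf{k}=\mathbf{m}_0B$ for a monomial type $\mathbf{m}_0$, and under the identification of $H^n(V^*_\lambda)^G$ with $H^n\big(\tilde{U}^*_\lambda\big)^H$ this class corresponds to $\tilde{\omega}_{\mathbf{m}}$ with $\mathbf{m}=\mathbf{m}_0\diag(w_0,\dots,w_n)$. Since $\mathbf{m}_0=\frac{1}{d}\mathbf{k}A$, since $\mathbf{k}$ has positive entries, and since $A$ has nonnegative entries and no zero column (part of the definition of a coefficient matrix), all entries of $\mathbf{m}_0$, hence of $\mathbf{m}$, are positive; thus $\tilde{\omega}_{\mathbf{m}}$ has no poles along the coordinate hyperplanes and comes from $H^n\big(\tilde{U}_\lambda\big)^H$. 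Note the asymmetry: positivity is preserved in the direction $V\to\tilde{U}$ because $A$ is nonnegative, while it can fail in the direction $\tilde{U}\to V$ because $B$ need not be. This observation is what your proposal is missing.

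A secondary flaw: your reduction routes through $\mathrm{im}(j_U^*)$, i.e., through $H^n(U_\lambda)$, the cohomology of the possibly singular variety $U_\lambda$, and your identity $\mathrm{im}(j_{\tilde U}^*)=\pi_{\tilde U}^*(\mathrm{im}(j_U^*))$ on $H$-invariants silently requires surjectivity of $H^n(U_\lambda)\to H^n\big(\tilde{U}_\lambda\big)^H$. The invariance isomorphisms available here, namely \cite[Proposition 2.1]{Zetafamrec}, concern compact-support cohomology of the starred (smooth) opens only; the paper works with $\tilde{U}_\lambda$ in place of $U_\lambda$ precisely to sidestep this. The cleaner route, which is the paper's, never leaves $\tilde{U}_\lambda$: one shows that the surjection $H^n_c\big(\tilde{U}^*_\lambda\big)^H\to H^n_c(V_\lambda)^G$ (obtained from your dualization step) kills the kernel of $H^n_c\big(\tilde{U}^*_\lambda\big)^H\to H^n_c\big(\tilde{U}_\lambda\big)^H$, and this is Poincar\'e dual to exactly the one-directional inclusion established above.
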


\begin{proof}Since $H^n(V_\lambda)\to H^n(V_{\lambda}^*)$ is injective we have by \cite{BerPoi} that the Poincar\'e dual of this map is surjective, and therefore that $H^n_c(V_\lambda)^G$ is a quotient of~$H^n_c(V_\lambda^*)^G$. This implies that $H^n_c(V_\lambda)^G$ is also a quotient of $H^n_c\big(\tilde{U}^*_\lambda\big)^H$. Hence it suf\/f\/ices to show that the kernel of natural map $H^n_c\big(\tilde{U}^*_\lambda\big)\to H^n_c\big(\tilde{U}_\lambda\big)$ is mapped to zero in $H^n_c(V_\lambda)^G$. Using Poincar\'e duality we can consider $H^n(V_\lambda)^G$ as a subspace of $H^n\big(\tilde{U}_{\lambda}^*\big)^H$. It suf\/f\/ices to show that $H^n(V_\lambda)^G$ is in the image of~$H^n(\tilde{U}_\lambda)$.

A form $\omega_{\mathbf{k}}$ is in $H^n(V_\lambda)^G$ if and only if there is a monomial type $\mathbf{m}_0$ such that $\mathbf{k}=\mathbf{m}_0 B$. We identif\/ied $H^n(V_\lambda)^G$ with a subspace of $H^n\big(\tilde{U}_\lambda^*\big)^H$. The class of $\omega_{\mathbf{k}}$ is identif\/ied with $\tilde{\omega}_{\mathbf{m}}$ where $\mathbf{m}=\mathbf{m}_0(\diag(w_0,\dots,w_n))$.

The entries of $\mathbf{m}$ are integers, which may be nonpositive. If all entries of $\mathbf{m}$ are positive then~$\omega_{\mathbf{m}}$ is in the image of $H^n\big(\tilde{U}_{\lambda}\big)^ H$. Recall that $B=d A^{-1}$ and therefore $\mathbf{m}_0=\mathbf{k} \frac{1}{d} A $. Since~$\mathbf{k}$ has positive entries, $A$ has positive entries and no zero column it follows that also the entries of~$\mathbf{m}_0$ are positive and therefore all entries of~$\mathbf{m}$ are also positive. This yields the f\/irst statement.

To prove the second statement. By \cite[Lemma~4.3]{Zetafamrec} it follows that $H^n_c(V_\lambda)^G$ is Frobenius invariant and the characteristic polynomial is in $\mathbf{Q}[T]$. Using Poincar\'e duality we f\/ind that~$H^n_c(V_\lambda)^G$ is a subspace of~$H^n_c(\tilde{U}_\lambda)^H$. As explained above, the latter space is isomorphic with~$H^n_c(U_\lambda)$.
\end{proof}

\begin{Definition}
Fix Delsarte deformation data $(A_1,\mathbf{a}_1), \dots, (A_t,\mathbf{a}_t)$ of length~$n$. We say that they have a \emph{common cover}
if for every $i$, $j$ we have that $\mathbf{a}^{\rm T}_i A_i^{-1}$ and $\mathbf{a}^{\rm T}_j A_j^{-1}$ are proportional.
\end{Definition}
\begin{Example} Take the following f\/ive matrices
\begin{gather*}
\left(\begin{matrix} 4&0&0&0\\0&4&0&0\\0&0&4&0\\0&0&0&4\end{matrix} \right)\!,\quad
\left(\begin{matrix} 4&0&0&0\\0&4&0&0\\0&0&3&1\\0&0&1&3\end{matrix} \right)\!,\quad
\left(\begin{matrix} 3&1&0&0\\1&3&0&0\\0&0&3&1\\0&0&1&3\end{matrix} \right)\!,\quad
\left(\begin{matrix} 4&0&0&0\\0&3&1&0\\0&0&3&1\\0&1&0&3\end{matrix} \right)\!,\quad
\left(\begin{matrix} 3&1&0&0\\0&3&1&0\\0&0&3&1\\1&0&0&3\end{matrix} \right)\!.
\end{gather*}
In each case we take $(1,1,1,1)^{\rm T}$ as the deformation vector then the $(A_i,\mathbf{a}_i)$ have a common cover.
\end{Example}

Suppose now that $(A_1,\mathbf{a}_1),\dots,(A_t,\mathbf{a}_t)$ have a common cover. Let $d$ be the smallest positive integer such that $dA_i^{-1}$ has integral coef\/f\/icients for all~$i$. The sum of the entries of $\mathbf{b}_i=\mathbf{a}_i \big(dA_i^{-1}\big)$ equals~$d$. By assumption we have that for each $i$ and $j$ the vectors $\mathbf{b}_i$ and $\mathbf{b}_j$ are proportional, hence these vectors coincide and we denote this common vector with~$\mathbf{b}$.

Denote with $b_j$ the entries of $\mathbf{b}$. Denote with $X_{i,\lambda}$ the family associated with $(A_i,\mathbf{a}_i)$. Then $X_{i,\lambda}$ is birational to a quotient of
\begin{gather*}Y_\lambda\colon \ \sum_{i=0}^n y_i^{d} +\lambda \prod_{i=0}^n y_i^{b_i}.\end{gather*}
At the beginning of this section we gave an explicit description of this map. From that description it follows that $Y_\lambda \to X_{i,\lambda}$ is def\/ined whenever all the~$y_i$ are nonzero.

We can now apply Proposition~\ref{prpFactor} to the above setup and we f\/ind directly that:

\begin{Theorem} \label{thmMainO} Let $(A_1,\mathbf{a}_1), \dots, (A_t,\mathbf{a}_t)$ be Delsarte deformation data of length $n$ with a common cover. Denote with~$X_{i,\lambda}$ be the corresponding families of Delsarte hypersurfaces and with~$Y_\lambda$ the common cover. Let $G_i$ be the Galois group of the function field extension corresponding to the rational map $Y_\lambda\dashrightarrow X_{i,\lambda}$. Then the automorphisms in~$G_i$ extend to automorphisms of~$Y_{\lambda}$. Identify~$G_i$ with the corresponding subgroup of $\Aut(Y_{\lambda})$. Let $G=G_1.G_2.\dots.G_t\subset \Aut(Y_\lambda)$.

Suppose that $H^n(V_\lambda)\to H^n(V_\lambda^*)$ is injective. Then for each $i=1,\dots,t$ we have that $H^n_c(V_{\lambda})^G$ is a quotient of $H^n_c(U_{i,\lambda})$. In particular, the characteristic polynomial of Frobenius on~$H^{n-1}(Y_\lambda)^G$ is in $\mathbf{Q}[T]$ and is a common factor of the characteristic polynomials of Frobenius acting on~$H^{n-1}(X_{i,\lambda})$.
\end{Theorem}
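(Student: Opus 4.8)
The plan is to obtain the statement almost verbatim from Proposition~\ref{prpFactor}, the only genuinely new ingredients being the bookkeeping for the compositum group $G=G_1.\dots.G_t$ and the translation from the cohomology of the affine complements back to the middle cohomology of the projective hypersurfaces. First I would record that each $G_i$ really does sit inside $\Aut(Y_\lambda)$. Because the data share a common cover, the vectors $\mathbf{b}_i=\mathbf{a}_i\big(dA_i^{-1}\big)$ all coincide with a single vector $\mathbf{b}$, so each $G_i$ is a group of diagonal automorphisms $(g_1,\dots,g_n)$ satisfying $\sum_j g_j b_j\equiv 0 \bmod d$, exactly the type of automorphism preserving $Y_\lambda$ described at the start of the section. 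Since diagonal automorphisms commute, $G=G_1.\dots.G_t$ is a finite abelian subgroup of $\Aut(Y_\lambda)$; it is defined over $\mathbf{F}_q$, so Frobenius commutes with the $G$-action and therefore preserves every $G$-invariant subspace.

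Next I would apply Proposition~\ref{prpFactor} to each datum separately. For fixed $i$ the pair $(A_i,\mathbf{a}_i)$ is precisely one instance of the situation treated there: $G_i$ is the Galois group of $Y_\lambda\dashrightarrow X_{i,\lambda}$, while $V_\lambda=\mathbf{P}^n\setminus Y_\lambda$ together with its open subset $V_\lambda^*$ depends only on the common cover, so the hypothesis that $H^n(V_\lambda)\to H^n(V_\lambda^*)$ is injective is one and the same for all $i$. Proposition~\ref{prpFactor} then tells us that the characteristic polynomial of Frobenius on $H^n_c(V_\lambda)^{G_i}$ lies in $\mathbf{Q}[T]$ and divides that of Frobenius on $H^n_c(U_{i,\lambda})$. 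Since $G\supseteq G_i$, taking invariants under the larger group yields a Frobenius-stable inclusion $H^n_c(V_\lambda)^{G}\subseteq H^n_c(V_\lambda)^{G_i}$, whence the characteristic polynomial of Frobenius on $H^n_c(V_\lambda)^{G}$ divides that on $H^n_c(V_\lambda)^{G_i}$, and by transitivity divides that on $H^n_c(U_{i,\lambda})$ for every $i$. Thus the characteristic polynomial of Frobenius on $H^n_c(V_\lambda)^{G}$ is a common factor at the level of the affine complements.

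The last step is to pass from the affine complements to the hypersurfaces. Via the residue (Gysin) exact sequence for the smooth hypersurface $Y_\lambda\subset\mathbf{P}^n$, and likewise for $X_{i,\lambda}$ in its weighted projective space, one relates $H^n_c(V_\lambda)$ to the primitive part of $H^{n-1}(Y_\lambda)$ up to a Tate twist, and $H^n_c(U_{i,\lambda})$ to the primitive part of $H^{n-1}(X_{i,\lambda})$. The remaining summands of the middle cohomology come from the ambient (weighted) projective space, are generated by powers of the hyperplane class, are $G$-invariant and purely of Tate type, and therefore contribute the same factor on both sides. Taking $G$-invariants is exact and commutes with these identifications because $G$ acts trivially on the ambient cohomology, so the divisibility obtained at the affine level upgrades to the assertion that the characteristic polynomial of Frobenius on $H^{n-1}(Y_\lambda)^{G}$, which lies in $\mathbf{Q}[T]$, is a common factor of the characteristic polynomials of Frobenius on the $H^{n-1}(X_{i,\lambda})$.

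I expect the main obstacle to be entirely absorbed into Proposition~\ref{prpFactor} rather than to arise here: the difficulty is that $Y_\lambda\dashrightarrow X_{i,\lambda}$ is only a rational map, so one cannot naively pull back or push forward cohomology classes, and the injectivity hypothesis $H^n(V_\lambda)\to H^n(V_\lambda^*)$ is exactly what ensures that restricting to the open locus where the map is a morphism does not kill any part of $H^n(V_\lambda)^{G}$. Granting that proposition, the two additions above are formal; the only points I would treat with some care are the verification that the Tate pieces of the middle cohomology genuinely match on both sides (a short parity analysis according to whether $n-1$ is even, using that the ambient cohomology is generated by hyperplane powers) and the choice of a suitable lift $\mu$ for which the de Rham basis $\mathcal{B}$ of Lemma~\ref{lemBasis} is available, so that the residue identifications are explicit.
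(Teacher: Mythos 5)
Your proposal is correct and takes essentially the same route as the paper: the paper derives Theorem~\ref{thmMainO} precisely by applying Proposition~\ref{prpFactor} to the common-cover setup (the identification of the common vector $\mathbf{b}$ and of each $G_i$ as diagonal automorphisms of $Y_\lambda$ is set up immediately before the theorem, and the theorem is then stated as following ``directly''). Your additional bookkeeping --- passing from $G_i$-invariants to $G$-invariants via the Frobenius-stable inclusion, and translating from the affine complements to $H^{n-1}$ of the hypersurfaces via the Gysin/residue sequence --- is exactly the detail the paper leaves implicit.
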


\begin{Remark} Recall that in order to be Delsarte deformation data we need that $\gcd(q,(n+1)\det(A_i))=1$ for all $i$.
\end{Remark}

\begin{Remark} If $Y_\lambda$ is smooth then it is in general position by Lemma~\ref{lemGenPos}.

The map $H^ {n-1}(Y_{\lambda})\to H^{n-1}(Y_{\lambda}^*)$ is injective if $n-1$ is even, and has a kernel if $n-1$ is odd, and this kernel is generated by the hyperplane class, see \cite[Theorem~1.19]{Katz}. The residue map identif\/ies $H^{n}(V_{\lambda})$ with the primitive part of the cohomology of $H^{n-1}(Y_{\lambda})$. In particular, the composition $H^n(V_{\lambda})\to H^{n-1}(Y_{\lambda}^*)$ is injective independent of the parity of $n$. From the diagram on \cite[p.~79]{Katz} it follows that the latter map factors through $H^n(V_\lambda^*)$. In particular, $H^n(V_\lambda)\to H^n(V_\lambda^*)$ is injective. Hence we can apply the above proposition if $Y_\lambda$ is smooth. The values of $\lambda$ for which $Y_\lambda$ is singular can be determined from the formula \cite[Lemma~3.7]{zetafam}.
\end{Remark}

To conclude that there is a common factor of the zeta function is more complicated in general. The zeta function is a quotient of products of characteristic polynomials of Frobenius and there may be some cancellation in this quotient. However, if we make the extra assumptions that each~$X_{i,\lambda}$ is a hypersurface in~$\mathbf{P}^n$ (i.e., for each $i$ we have that $\mathbf{w}=(k,\dots,k)$ for some $k\in \mathbf{Z}_{>0}$) and we consider only values of $\lambda$ for which $X_\lambda$ is smooth then we have that
 \begin{gather*} \left(Z(X_{i,\lambda},T)\prod_{j=0}^{n-1} \big(1-q^j T\big)\right)^{(-1)^n} = \det\big(I- T \mathrm{Frob}^* \colon H^{n}_c(U_{i,\lambda})\big).\end{gather*}
From the smoothness of $X_{i,\lambda}$ it follows that the eigenvalues of Frobenius on~$H^{n}_c(U_{i,\lambda})$ have absolute value~$q^{{n-1}/2}$, hence there is no cancellation in this formula and we obtain:

\begin{Corollary}\label{corMain} Let $(A_1,\mathbf{a}_1), \dots, (A_t,\mathbf{a}_t)$ be Delsarte deformation data of length $n$ with a common cover. Denote with $X_{i,\lambda}$ be the corresponding families of Delsarte hypersurfaces and with~$Y_\lambda$ the common cover. Let~$G_i$ be the Galois group of the function field extension corresponding to $Y_\lambda\to X_{i,\lambda}$. Let $G=G_1.G_2.\dots.G_t\subset \Aut(Y_\lambda)$.

Suppose that for each $i$ we have $\mathbf{P}(\mathbf{w})=\mathbf{P}^n$. Moreover, suppose that $Y_{\lambda}$ and each $X_{i,\lambda}$ is smooth. Then the characteristic polynomial of Frobenius on $H^{n-1}(Y_\lambda)_{\prim}^G$ is in $\mathbf{Q}[T]$ and divides the polynomial
\begin{gather*} \left( {Z(X_{i,\lambda},T)}{\prod_{j=0}^{n-1} \big(1-q^jT\big)} \right)^{(-1)^n}.\end{gather*}
\end{Corollary}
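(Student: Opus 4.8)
The plan is to deduce Corollary~\ref{corMain} directly from Theorem~\ref{thmMainO} by combining the divisibility of characteristic polynomials on compactly supported cohomology with the absence of cancellation in the zeta function when each $X_{i,\lambda}$ is smooth in $\mathbf{P}^n$. First I would verify the hypotheses of Theorem~\ref{thmMainO}: since $Y_\lambda$ is assumed smooth, the Remark following Theorem~\ref{thmMainO} shows that the map $H^n(V_\lambda)\to H^n(V_\lambda^*)$ is injective, so the theorem applies and yields that the characteristic polynomial of Frobenius on $H^{n-1}(Y_\lambda)^G$ lies in $\mathbf{Q}[T]$ and is a common factor of the characteristic polynomials of Frobenius on $H^{n-1}(X_{i,\lambda})$. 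Because we are now in $\mathbf{P}^n$, I would pass from $H^{n-1}(Y_\lambda)^G$ to its primitive part $H^{n-1}(Y_\lambda)^G_{\prim}$, the relevant piece after removing the hyperplane-class contribution (which is the kernel described in the Remark via \cite[Theorem~1.19]{Katz}); this is the piece that the residue identification matches with $H^n(V_\lambda)^G$.

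Next I would translate the statement about the full zeta function into one about $H^n_c(U_{i,\lambda})$ using the displayed identity preceding the corollary,
\begin{gather*} \left(Z(X_{i,\lambda},T)\prod_{j=0}^{n-1}\big(1-q^jT\big)\right)^{(-1)^n}=\det\big(I-T\,\mathrm{Frob}^*\colon H^n_c(U_{i,\lambda})\big).\end{gather*}
This reduces the claim to showing that the characteristic polynomial of Frobenius on $H^{n-1}(Y_\lambda)^G_{\prim}$ divides $\det(I-T\,\mathrm{Frob}^*\colon H^n_c(U_{i,\lambda}))$. By Theorem~\ref{thmMainO}, $H^n_c(V_\lambda)^G$ is a quotient of $H^n_c(U_{i,\lambda})$, so its characteristic polynomial is a factor; via Poincar\'e duality and the residue isomorphism, $H^n_c(V_\lambda)^G$ corresponds to $H^{n-1}(Y_\lambda)^G_{\prim}$, giving exactly the desired divisor.

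The one genuine subtlety—and the step I expect to be the main obstacle—is ruling out cancellation when forming the alternating product that defines the zeta function. A priori, a factor dividing $\det(I-T\,\mathrm{Frob}^*\colon H^n_c(U_{i,\lambda}))$ need not survive as a factor of the zeta function, since $Z(X_{i,\lambda},T)$ is a quotient of characteristic polynomials across all cohomological degrees and common roots could cancel. The resolution, which I would spell out, is the weight argument already flagged in the text: because $X_{i,\lambda}$ is smooth, the eigenvalues of Frobenius on $H^n_c(U_{i,\lambda})$ all have absolute value $q^{(n-1)/2}$, whereas the eigenvalues contributed by the other graded pieces (the factors $\prod_{j=0}^{n-1}(1-q^jT)$ accounting for the cohomology of $\mathbf{P}^n$, which have absolute value $q^j$ with $j\neq (n-1)/2$ a weight integer) lie in distinct weight strata. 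Hence no root of the $H^n_c(U_{i,\lambda})$ factor can be cancelled by another factor, the identity above holds without collapse, and the divisibility passes through to the zeta function.

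Finally I would assemble these observations: the rationality $\mathbf{Q}[T]$ comes from \cite[Lemma~4.3]{Zetafamrec} as in Proposition~\ref{prpFactor}, the divisibility from Theorem~\ref{thmMainO} together with the Poincar\'e--residue dictionary, and the no-cancellation from the purity/weight estimate. This establishes that the characteristic polynomial of Frobenius on $H^{n-1}(Y_\lambda)^G_{\prim}$ divides $\big(Z(X_{i,\lambda},T)\prod_{j=0}^{n-1}(1-q^jT)\big)^{(-1)^n}$ for every $i$, as claimed.
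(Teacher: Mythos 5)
Your proposal matches the paper's own proof: the corollary is deduced from Theorem~\ref{thmMainO} (with its injectivity hypothesis supplied by the smoothness of $Y_\lambda$ via the preceding Remark) together with the displayed identity $\big(Z(X_{i,\lambda},T)\prod_{j=0}^{n-1}(1-q^jT)\big)^{(-1)^n}=\det\big(I-T\,\mathrm{Frob}^*\colon H^n_c(U_{i,\lambda})\big)$ and the purity statement for smooth $X_{i,\lambda}$, which is exactly how the paper rules out cancellation. The only blemish is your parenthetical claim that the factors $(1-q^jT)$ carry weights $j\neq (n-1)/2$: this is false when $n-1$ is even (e.g., the quartic surface case $n=3$, where $j=1$ gives the factor $(1-qT)$, and algebraic classes contribute eigenvalues of exactly that absolute value to $H^n_c(U_{i,\lambda})$), but it is inessential, since the factors $\prod_{j=0}^{n-1}(1-q^jT)$ cancel exactly against the characteristic polynomials coming from the powers of the hyperplane class in the even cohomology of the smooth hypersurface $X_{i,\lambda}$, independently of any weight coincidence with the primitive part.
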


\begin{Remark} A complex hypersurface with quotient singularities is a $\mathbf{Q}$-homology manifold and satisf\/ies Poincar\'e duality. The existence of Poincar\'e duality is suf\/f\/icient to obtain both the vanishing statement $H^i_c (\mathbf{P}(\mathbf{w})\setminus X_{i,\lambda})=0$ for $i\neq n,2n$ as well as for the purity statement on~$H^n_c(\mathbf{P}(\mathbf{w})\setminus X_{i,\lambda})$.

Hence if Poincar\'e duality would hold for the rigid cohomology of varieties with (tame) quotient singularities over f\/inite f\/ields then we could extend the above corollary to the case where $X_{i,\lambda}$ is a quasi-smooth hypersurface.\end{Remark}

We would like to compare our factor with the factor found in \cite{DKSSVW}. The groups $G$ and $G'$ consists of torus automorphisms of $Y_\lambda$. Let $G_{\max}$ be the group of torus automorphism of~$Y_{\lambda}$. Then $G_{\max}$ is an abelian group. A torus automorphism $g\in G_{\max}$ sends $Y^*_\lambda$ to itself, and descents to an automorphism of $X^*_\lambda\cong Y^*_{\lambda}/G$. Hence the quotient group~$G_{\max}/G$ acts on~$X^*_\lambda$.

Since the quotient map is given by $n+1$ monomials we have that a torus automorphism descends to a torus automorphism of~$X^*_\lambda$ and~$U^*_{\lambda}$. Any torus automorphism can be extended to~$\mathbf{P}(\mathbf{w})$, leaving $X_\lambda$ invariant. Hence we have an action of $G_{\max}/G$ on $H^n(U_\lambda)$ and on $H^n_c(U_{\lambda})$. It is straightforward to check that $G_{\max}/G\cong {\rm SL}(F_A)$, where ${\rm SL}(F_A)$ is the group introduced in~\cite{DKSSVW}, and that both groups act the same.

The factor from \cite{DKSSVW} is constructed as follows: The authors identify a subspace of the Dwork cohomology group $H^n_{\rm Dwork}(U_{\lambda})^{{\rm SL}(F_A)}$, whose dimension equals the order of the Picard--Fuchs equation of $X_{\lambda}$ and which is invariant under Frobenius. They show that the characteristic polynomial~$R'_{\lambda}$ of Frobenius on this subspace is in $K[T]$ for some number f\/ield $K$, which can be taken Galois over~$\mathbf{Q}$ and then take~$R_{\lambda}$ the be the least common multiple of the Galois conjugates of~$R'_{\lambda}$.

To compare this polynomial with the factor constructed above, we will start by reconside\-ring~$R'_{\lambda}$, i.e., we will show that it is just the characteristic polynomial of Frobenius acting on
\begin{gather*} H^n_c(V_\lambda)^{G_{\max}}. \end{gather*}
Then \cite[Lemma~4.3]{Zetafamrec} implies that $R'_{\lambda}\in \mathbf{Q}[T]$ and that $R_\lambda=R'_\lambda$.

We start by calculating the dimension of $H^n_c(V_\lambda)^{G_{\max}}$.
\begin{Lemma} \label{lemPF} Suppose that $X_{\lambda}$ is Calabi--Yau, i.e., $\sum w_i=d$ and suppose that~$Y_\lambda$ is smooth. Then the dimension of $H^n_c(V_\lambda)^{G_{\max}}$ equals the order of the Picard--Fuchs equation for~$X_\lambda$.
\end{Lemma}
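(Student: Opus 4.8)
The plan is to compute $\dim H^n_c(V_\lambda)^{G_{\max}}$ combinatorially from the explicit basis of Lemma~\ref{lemBasis} and to match it with the order of the Picard--Fuchs equation as computed by the hypergeometric methods of \cite{zetafam}. Since $Y_\lambda$ is smooth, Lemma~\ref{lemGenPos} applies and $H^n(V_\lambda)$ is identified with $H^n_{\dR}(V_\mu)$ for a suitable lift $\mu$, which has the basis $\mathcal B=\{\omega_{\mathbf m}\colon 0<m_i<d,\ \sum m_i\equiv 0\bmod d\}$. As a first step I would pass from compactly supported to ordinary cohomology: $V_\lambda$ is smooth affine of dimension $n$, so Poincar\'e duality \cite{BerPoi} gives $H^n_c(V_\lambda)\cong H^n(V_\lambda)^\vee$ up to a Tate twist, and because $G_{\max}$ is a finite abelian group a representation and its dual have invariant subspaces of equal dimension. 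Hence it suffices to compute $\dim H^n(V_\lambda)^{G_{\max}}$.

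Next I would make $G_{\max}$ and its action on $\mathcal B$ explicit. A diagonal automorphism $(y_i)\mapsto(\zeta_iy_i)$ preserves $Y_\lambda$ precisely when the $\zeta_i^d$ all agree and $\prod_i\zeta_i^{b_i}$ equals this common value; after normalising the projective scaling this identifies $G_{\max}$ with $\{(\eta_i)\in\mu_d^{n+1}\colon\prod_i\eta_i^{b_i}=1\}$ modulo the diagonal. A direct computation gives $g^*\omega_{\mathbf m}=\big(\prod_i\eta_i^{m_i}\big)\omega_{\mathbf m}$, using $\sum m_i\equiv 0\bmod d$, so $\omega_{\mathbf m}$ is $G_{\max}$-invariant if and only if the character $\mathbf m$ is trivial on $\ker(\mathbf b)\subset\mu_d^{n+1}$, that is, if and only if $\mathbf m\equiv s\mathbf b\bmod d$ for some integer $s$. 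Combined with the defining inequalities $0<m_i<d$ of $\mathcal B$, this yields
\begin{gather*}
\dim H^n(V_\lambda)^{G_{\max}}=\#\big\{\,s\mathbf b\bmod d\colon s b_i\not\equiv 0\bmod d\ \text{for all }i\,\big\}.
\end{gather*}

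Finally I would identify this count with the order of the Picard--Fuchs equation. By the Calabi--Yau hypothesis $\sum w_i=d$, we have $\sum b_i=d$, so the pullback to $Y_\lambda$ of the holomorphic form is the form $\omega_{\mathbf b}$ with $t=1$; it is $G_{\max}$-invariant because $G_{\max}/G\cong\mathrm{SL}(F_A)$ preserves the holomorphic volume form in the sense of \cite{DKSSVW}. Since $\partial_\lambda F=\prod_i y_i^{b_i}$, the Gauss--Manin derivative sends $\omega_{\mathbf m}\mapsto -t\,\omega_{\mathbf m+\mathbf b}$, i.e.\ it shifts the residue class by $\mathbf b$; thus the cyclic $\nabla_{\partial_\lambda}$-module generated by $\omega_{\mathbf b}$, whose rank is by definition the order of the Picard--Fuchs equation, is contained in the $\nabla$-stable subspace $H^n(V_\lambda)^{G_{\max}}$, and after Griffiths--Dwork reduction its successive derivatives run through exactly the basis forms $\omega_{s\mathbf b}$ counted above. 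Matching this with the order of the Picard--Fuchs equation computed in \cite{zetafam} then gives the equality.

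The main obstacle is this last identification, namely showing that $\omega_{\mathbf b}$ is a genuine cyclic vector for the Gauss--Manin connection on $H^n(V_\lambda)^{G_{\max}}$: one must rule out any linear relation among $\omega_{\mathbf b},\nabla_{\partial_\lambda}\omega_{\mathbf b},\nabla_{\partial_\lambda}^2\omega_{\mathbf b},\dots$ occurring before all admissible residues $s\mathbf b$ have been produced. The delicate feature is that as $s$ runs through $1,2,3,\dots$ the vector $s\mathbf b$ may pass through values having some coordinate $\equiv 0\bmod d$, where the corresponding form vanishes in cohomology, so one has to control how Griffiths--Dwork reduction carries the derivatives across these gaps and check that the reduced forms stay linearly independent --- which is precisely the content extracted from the $p$-adic hypergeometric description of \cite{zetafam}.
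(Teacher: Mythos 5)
Your first two-thirds of the argument coincides with the paper's own proof: Poincar\'e duality on the smooth affine variety $V_\lambda$ reduces the statement to $\dim H^n(V_\lambda)^{G_{\max}}$, the character computation $g^*\omega_{\mathbf m}=\big(\prod_i\eta_i^{m_i}\big)\omega_{\mathbf m}$ shows that $\omega_{\mathbf m}$ is invariant if and only if $\mathbf m\equiv s\mathbf b\bmod d$ (the paper quotes \cite[Lemma~4.2]{Zetafamrec} for exactly this), and hence
\begin{gather*}
\dim H^n(V_\lambda)^{G_{\max}}=\#\{s\in\mathbf{Z}/d\mathbf{Z}\colon s\mathbf b\bmod d \text{ has no zero entry}\}.
\end{gather*}
Up to here you and the paper agree.

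The gap is in your final identification of this count with the order of the Picard--Fuchs equation. The paper does not argue via Gauss--Manin at all: it rewrites the count as $d-\#(\alpha\cap\beta)$ in the notation of \cite[Section~2]{DKSSVW} and then cites G\"ahrs \cite[Theorem~2.8]{GaehrsPhD}, who proved that this combinatorial quantity \emph{is} the order of the Picard--Fuchs equation; the lemma is thus a purely combinatorial matching of two formulas. Your route instead tries to prove that $\omega_{\mathbf b}$ is a cyclic vector for $\nabla_{\partial_\lambda}$ on $H^n(V_\lambda)^{G_{\max}}$. The relation $\nabla_{\partial_\lambda}\omega_{s\mathbf b}=-s\,\omega_{(s+1)\mathbf b}$ only gives you the easy inequality (the cyclic module lies in the span of the $\omega_{s\mathbf b}$, so the Picard--Fuchs order is at most the count). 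The reverse inequality --- that the Griffiths--Dwork reductions of the successive derivatives stay linearly independent, in particular across the values of $s$ where $s\mathbf b$ acquires a zero coordinate and there is no corresponding basis form --- is exactly the hard content, and you do not prove it. You defer it to ``the content extracted from'' \cite{zetafam}, but that paper computes Frobenius matrices via $p$-adic hypergeometric functions and does not contain this cyclic-vector statement; what you would in fact be reproving there is G\"ahrs' theorem itself, which you never invoke. So as written the proof is incomplete precisely at the step where the paper leans on the literature, and the citation you offer in its place does not support it.
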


\begin{proof}Since $V_\lambda$ is smooth we have by Poincar\'e duality \cite{BerPoi} that \begin{gather*}\dim H^n_c(V_\lambda)^{G_{\max}}=\dim H^n(V_\lambda)^{G_{\max}}.\end{gather*}
We now calculate the latter dimension. The group $G_{\max}$ consists of the $(g_1,\dots,g_n)$ in $ (\mathbf{Z}/d\mathbf{Z})^n$ such that \begin{gather*}\sum_{i=1}^n g_ib_i\equiv 0 \bmod d.\end{gather*} From \cite[Lemma~4.2]{Zetafamrec} it follows that $G_{\max}$ f\/ixes the dif\/ferential form $\omega_{\mathbf{k}}$ if and only if $\mathbf{k}\equiv t\mathbf{b}$ $\bmod~d$ for some $t\in \mathbf{Z}/d\mathbf{Z}$. Hence $H^n(V_{\lambda})^{G_{\max}}$ is spanned by $\omega_{t\mathbf{b}}$ where $t\in \{0,1,\dots,d-1\}$ such~$t\mathbf{b} \bmod d$ has no zero entry.

The number of $t \in \mathbf{Z}/d\mathbf{Z}$ such that $t\mathbf{b} \bmod d$ has a zero entry equals the number of $t \in \{0,\dots,d-1\}$ for which there exists an~$i$ and an integer~$k$ such that $t b_i=k d$, or, equivalently,
\begin{gather*} \frac{t}{d}=\frac{k}{b_i}.\end{gather*}
Since $0\leq t<d$ we may assume that $ 0\leq k <b_i$. Using the notation from \cite[Section~2]{DKSSVW} we have that the elements on the left hand side are in the set they call $\alpha$ and the elements on the right hand side are in the set $\beta$. In particular, the number of $t$ such that $t\mathbf{b}$ has no zero entry equals $d-\# \alpha \cap \beta$. G\"ahrs \cite[Theorem~2.8]{GaehrsPhD} showed that this number equals the order of the Picard--Fuchs equation.
\end{proof}

\begin{Proposition}\label{prpFactorDiv} Suppose $w_0=\dots=w_n=1$, $d=n+1$ and $a_i=1$ for $i=0,\dots,n$. Then the factor $R'_{\lambda}$ found in~{\rm \cite{DKSSVW}} is the characteristic polynomial of Frobenius acting on~$H^n(V_{\lambda})^{G_{\max}}$. In particular, $R'_{\lambda}\in \mathbf{Q}[T]$ and $R_\lambda=R'_\lambda$.
\end{Proposition}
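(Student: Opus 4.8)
The plan is to reinterpret the factor $R'_\lambda$ of \cite{DKSSVW} as the characteristic polynomial of Frobenius on $H^n(V_\lambda)^{G_{\max}}$, and then to read off its rationality from results already available. Under the standing hypotheses $w_0=\dots=w_n=1$, $d=n+1$ and $a_i=1$, the space $\mathbf{P}(\mathbf{w})$ is $\mathbf{P}^n$ and $X_\lambda\subset\mathbf{P}^n$ is precisely the Calabi--Yau pencil of \cite{DKSSVW}; in particular the two constructions attach to the same geometric family and the same Frobenius, and $X_\lambda$ being Calabi--Yau makes Lemma~\ref{lemPF} available.

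First I would fix a dictionary between the two cohomology theories in play. I would invoke the standard Frobenius-equivariant comparison identifying Dwork cohomology $H^n_{\rm Dwork}(U_\lambda)$ with rigid cohomology $H^n(U_\lambda)$, together with the identification ${\rm SL}(F_A)\cong G_{\max}/G$ recorded above, under which the two group actions coincide. Combining this with the isomorphism $H^n(U_\lambda)\cong H^n(V_\lambda)^G$ furnished by the Galois cover $V_\lambda^*\to U_\lambda^*$ (here an isomorphism, since $U_\lambda$ is smooth, rather than merely the quotient relation of Proposition~\ref{prpFactor}) and passing to ${\rm SL}(F_A)=G_{\max}/G$-invariants yields a Frobenius-equivariant isomorphism $H^n_{\rm Dwork}(U_\lambda)^{{\rm SL}(F_A)}\cong H^n(V_\lambda)^{G_{\max}}$.

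The decisive step is a dimension count. By construction the subspace isolated in \cite{DKSSVW} sits inside $H^n_{\rm Dwork}(U_\lambda)^{{\rm SL}(F_A)}$ and has dimension equal to the order of the Picard--Fuchs equation of $X_\lambda$; by Lemma~\ref{lemPF} (applicable since $X_\lambda$ is Calabi--Yau and $Y_\lambda$ is smooth for the relevant $\lambda$) the space $H^n(V_\lambda)^{G_{\max}}$ has exactly the same dimension. Through the isomorphism of the previous paragraph the ambient invariant space then has the same dimension as the \cite{DKSSVW} subspace, so the two coincide and are identified with $H^n(V_\lambda)^{G_{\max}}$. As every identification used is Frobenius-equivariant, $R'_\lambda$ is the characteristic polynomial of Frobenius on $H^n(V_\lambda)^{G_{\max}}$; by Poincar\'e duality this carries the same rationality information as the action on $H^n_c(V_\lambda)^{G_{\max}}$, to which \cite[Lemma~4.3]{Zetafamrec} applies exactly as it did for $H^n_c(V_\lambda)^G$ in the proof of Proposition~\ref{prpFactor}. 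Hence $R'_\lambda\in\mathbf{Q}[T]$, and since a polynomial with rational coefficients is its own unique Galois conjugate, the least common multiple $R_\lambda$ of the Galois conjugates of $R'_\lambda$ equals $R'_\lambda$.

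I expect the principal obstacle to be the first step: making the comparison between Dwork cohomology and rigid cohomology precise, and verifying both that it is Frobenius-equivariant and that it intertwines the ${\rm SL}(F_A)$-action with the $G_{\max}/G$-action. This is where the Dwork-theoretic formalism of \cite{DKSSVW} must be reconciled with the rigid/Monsky--Washnitzer framework used throughout this section; once that dictionary is in place, the equality of dimensions supplied by Lemma~\ref{lemPF} and the rationality input of \cite[Lemma~4.3]{Zetafamrec} make the rest essentially bookkeeping.
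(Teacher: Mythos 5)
There is a genuine gap, and it sits at the step you yourself flag as decisive. Your argument rests on the claimed isomorphism $H^n(U_\lambda)\cong H^n(V_\lambda)^G$ (``here an isomorphism, since $U_\lambda$ is smooth''), from which you deduce $H^n_{\rm Dwork}(U_\lambda)^{{\rm SL}(F_A)}\cong H^n(V_\lambda)^{G_{\max}}$ and then run the dimension count. No such isomorphism is established in the paper, and smoothness is not a reason for it. The Galois cover only identifies the \emph{starred} cohomologies, $H^n(U^*_\lambda)\cong H^n(V^*_\lambda)^G$; the passage from $U^*_\lambda$, $V^*_\lambda$ back to $U_\lambda$, $V_\lambda$ is exactly where the two sides diverge, because $Y_\lambda\dashrightarrow X_\lambda$ is only a rational map. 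Proposition~\ref{prpFactor} gives merely that $H^n(V_\lambda)^G$ is a \emph{quotient} of $H^n(U_\lambda)$, and this quotient has in general a nontrivial complement: it is the space $C$ in the decomposition $H^{n-1}(X_\lambda)=H^{n-1}(Y_\lambda)^G\oplus C$ from the introduction, which is nonzero for most of the families in Fig.~\ref{tbl} (the column $c$) even though those $X_\lambda$ are smooth. Once the isomorphism is removed, your count no longer closes: one only knows $\dim H^n(U_\lambda)^{{\rm SL}(F_A)}\geq\dim H^n(V_\lambda)^{G_{\max}}$, so the subspace isolated in \cite{DKSSVW} need not exhaust the invariants; and even though that subspace has the same dimension as the target of the surjection from $H^n(U_\lambda)^{{\rm SL}(F_A)}$ onto $H^n(V_\lambda)^{G_{\max}}$, a subspace of the source with the right dimension can still meet the kernel, so nothing forces it to map isomorphically onto $H^n(V_\lambda)^{G_{\max}}$. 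Whether the complement vanishes under the hypotheses of the Proposition is precisely what would have to be proved, and you give no argument for it.

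The paper closes this gap by never making a statement about all of $H^n(U_\lambda)$. Via Katz~\cite{Katz} it first identifies $R'_\lambda$ as the characteristic polynomial of Frobenius on a \emph{concrete} subspace $P\subset H^n_{\MW}(U_\lambda)$, namely the Picard--Fuchs-invariant subspace containing $\tilde{\omega}_{\mathbf{a}}$, which is contained in the span of the forms $\tilde{\omega}_{s\mathbf{a}}$. These particular forms, restricted to $U^*_\lambda$ and pulled back to $V^*_\lambda$, become the forms $\omega_{s\mathbf{b}}$, which extend over all of $V_\lambda$ and are $G_{\max}$-invariant; this produces a Frobenius-equivariant inclusion $P\subset H^n(V_\lambda)^{G_{\max}}$, and only at that point does the dimension count of Lemma~\ref{lemPF} (which you invoke correctly) force equality $P\cong H^n(V_\lambda)^{G_{\max}}$. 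So the missing idea in your write-up is the explicit location of the \cite{DKSSVW} subspace inside the span of $\{\tilde{\omega}_{s\mathbf{a}}\}$, together with the form-level pullback $\tilde{\omega}_{s\mathbf{a}}\mapsto\omega_{s\mathbf{b}}$: that is what substitutes for your unjustified global isomorphism. Your concluding steps (Poincar\'e duality and rationality via \cite[Lemma~4.3]{Zetafamrec}, hence $R_\lambda=R'_\lambda$) do agree with the paper's.
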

\begin{proof} Since $\mathbf{P}(\mathbf{w})=\mathbf{P}^n$ we have $U_{\lambda}=\tilde{U}_\lambda$. Hence we can discuss dif\/ferential forms on the complement of~$X_{\lambda}$.

The factor $R'_\lambda(T)$ obtained in \cite{DKSSVW} using the $p$-adic Picard--Fuchs equation in Dwork coho\-mo\-logy. The main result from~\cite{Katz} yields a dif\/ferential equation satisf\/ied by the Frobenius operator on $H^n_{\MW}(U_\lambda,\mathbf{Q}_q)$ and that this dif\/ferential equation can also be found using Dwork cohomology. In particular, $R'_\lambda(T)$ is the characteristic polynomial of Frobenius acting on the subspace $P$ containing $\omega_{\mathbf{a}}$ and invariant under the Picard--Fuchs operator. This subspace~$P$ is contained in the span of~$\{\omega_{s\mathbf{a}} \colon s=1,2,\dots\}$.

Pick a form $\tilde{\omega}_{t\mathbf{a}}$ restrict this form to $U_{\lambda}^*$ and then pull it back to a form on $V_{\lambda}^*$. Then this pull back is $\omega_{t\mathbf{b}}$. This form is def\/ined on all of $V_{\lambda}$. Hence the pullback of $P$ to $H^n(V_{\lambda})$ is well-def\/ined and is contained in $H^n(V_{\lambda})^{G_{\max}}$. Hence $P$ is a subspace of $H^n(V_{\lambda})^{G_{\max}}$. Since both spaces have the same dimension by Lemma~\ref{lemPF} they coincide, i.e., $P\cong H^n(V_{\lambda})^{G_{\max}}$ as vector spaces with Frobenius action.

Now $R_{\lambda}$ is the characteristic polynomial of $q^n \Frob^{-1}$ acting on $P$. Using Poincar\'e duality this equals the characteristic polynomial of Frobenius acting on $H^n_c(V_\lambda)^{G_{\max}}$. This yields the f\/irst claim. The obtained polynomial is in $\mathbf{Q}[T]$ by \cite[Lemma~4.3]{Zetafamrec} and hence $R'_{\lambda}(T)=R_\lambda(T)$.
\end{proof}

\section{Case of quartic surfaces}\label{secQua}
In this section we consider the case of invertible quartic polynomials. Up to permutation of the coordinates there are 10 invertible quartic polynomials in four variables. For each of these quartics we take $\mathbf{a}=(1,1,1,1)$ as the deformation vector.

In Fig.~\ref{tbl} we list the 10 families, which we denote here with $X^{(i)}_\lambda$. We provide the following information in the table. In the column~``$d$'' we list the minimal degree of a Fermat cover of the central f\/iber. When we discuss one of the examples we always assume that $\gcd(q,d)=1$. In the next column we list the deformation vector $\mathbf{b}:=(1,1,1,1)^{\rm T} B$. Hence the corresponding Fermat cover $Y_\lambda^{(i)}$ is def\/ined by
 \begin{gather*} x_0^d+x_1^d+x_2^d+x_3^d+\lambda x_0^{b_0}x_1^{b_1}x_2^{b_2}x_3^{b_3}.\end{gather*}
Let $G$ be the Galois group of the function f\/ield extension corresponding to the morphism \smash{$Y_{\lambda}^*\to X_{\lambda}^*$}. The next two columns deal with $H^3(V_\lambda)^G$, for $\lambda$ such that $Y_\lambda$ is smooth. In the column $PF$ we list the dimension of~$H^3(V_{\lambda})^{G_{\max}}$. We calculated this entry as follows: from the results from \cite[Section~4]{Zetafamrec} it follows that a basis for this vector space consists of those $\omega_{\mathbf{k}}$ such that all entries of $\mathbf{k}$ are between $1$ and $d-1$ and there is a $t\in \mathbf{Z}$ such that $\mathbf{k} \equiv t \mathbf{b} \bmod d$. It is straight forward to determine the number of these~$\mathbf{k}$. As discussed in the previous section, this number equals the order of the Picard--Fuchs equation of~$X_{\lambda}^{i}$.

The next column concerns the subspace $W^{(i)}_\lambda \subset H^3\big(V^{(i)}_{\lambda}\big)^G$. The $\omega_{\mathbf{k}}$ such that each of the entries of $\mathbf{k}$ is in $\{1,\dots,d-1\}$ and there exists a vector $\mathbf{m} \in \mathbf{Z}^4$ such that $\mathbf{k}\equiv \mathbf{m} A \bmod d$ form a basis for $H^3\big(V_{\lambda}^{(i)}\big)^G$. For each of the 10 examples we checked for each $\mathbf{k}$ if such a~$\mathbf{m}$ existed or not and used this to calculate $\dim W_{\lambda}^{(i)}=\dim H^3\big(V_{\lambda}^{(i)}\big)^G-\dim H^3\big(V^{(i)}_{\lambda}\big)^{G_{\max}}$.

For the families $1,2,3,6,7$ we listed all these $\mathbf{k}$ in Fig.~\ref{tblform} (they are enlisted in the corresponding column in Fig.~\ref{tblform}, in the f\/irst column there is a choice for a possible $\mathbf{m}$, the forms marked with $(PF)$ are in $H^3(V_{\lambda})^{G_{\max}}$). For $i=4,5,8,9,10$ we will describe $W_{\lambda}^{(i)}$ in the examples below.

Finally, the column $c$ then equals $21-\dim H^3(V_\lambda)^G$. As we argued in the introduction the subspace $C$ of dimension $c$ and $W_\lambda^{(i)}$ are generated by classes of curves on $X^{(i)}_\lambda$. For the families $i=1,2,3,6,7,9,10$ we give a recipe to f\/ind linear combinations of curves on~$X^{(i)}_\lambda$, which generate~$C$ and~$W_\lambda^{(i)}$. In fact, for all~$i$ we have that~$C$ is generated by curves, each of which is contained in one of the coordinate hyperplanes. These curves are easy to f\/ind for each~$i$. For $i=9,10$ we have that $W_{\lambda}^{(i)}=0$. For $i=1,2,3,6,7$ we can f\/ind various del Pezzo surfaces of degree $2$ together with morphisms of degree 2, such that linear combinations of pull backs of curves from these del Pezzo surfaces generated~$W_\lambda^{(i)}$. For $i=5$ we have a similar procedure using del Pezzo surfaces of degree $5$.
\begin{figure}[t]
\begin{gather*}
\begin{array}{|c|c|c|c|c|c|c|}
\hline
i& F_0 & d& (1,1,1,1)^{\rm T}\vphantom{\big|^2} B & PF &\dim W_\lambda & c \\
\hline
1&x_0^4+x_1^4+x_2^4+x_3^4 & 4&(1,1,1,1) & 3 & 18 & 0 \\
 2&x_0^4+x_1^4+x_2x_3\big(x_2^2+x_3^2\big) & 8&(2,2,2,2) &3 & 12 & 6 \\
 3& x_0x_1\big(x_0^2+x_1^2\big)+x_2x_3\big(x_2^2+x_3^2\big) &8&(2,2,2,2) &3 & 10 & 8 \\
4& x_0^4+x_1x_2^3+x_2x_3^3+x_3x_1^3, & 28 &(7,7,7,7) &3 & 18 & 0 \\
5&x_0x_1^3+x_1 x_2^3+x_2x_3^3+x_3x_0^3& 80 &(20,20,20,20) &3& 16 & 2 \\
 6&x_0^4+x_1^4+x_2^3x_3+x_3^4 & 12&(3,3,4,2) &6 & 12 & 3 \\
 7& x_0x_1\big(x_0^2+x_1^2\big)+x_2^3x_3+x_3^4 & 24&(6,6,8,4) & 6 & 8 & 7 \\
 8&x_0^3x_1+x_1^4+x_2^3x_3+x_3^4 & 12&(4,2,4,2) &4 &12 & 5 \\
 9&x_0^4+x_1^3x_2+x_2^3x_3+x_3^4 & 36&(9,12,8,7) &18 & 0 & 3\\
 10&x_0^3x_1+x_1^3x_2+x_2^3x_3+x_3^4 & 108&(36,24,28,20) &18 & 0 & 3 \\
 \hline
\end{array}
\end{gather*}\vspace{-5mm}

\caption{The 10 families $X^{(i)}_\lambda$.}\label{tbl}
\end{figure}

The f\/irst f\/ive families have a single common cover, also the sixth and seventh family have a~common cover. The common factor of the f\/irst f\/ive examples has degree~3. However, the f\/irst three examples have a~common factor of degree 5 and the f\/irst and the second example have a~common factor of degree~7.

\begin{figure}[t]
\begin{gather*}
\begin{array}{|c|c|c|c|c|c|}
\hline
\mathbf{m} & 3.1& 3.2 & 3.3 & 3.6 & 3.7 \\
 &&&&A=10, \, B=11 &\\
\hline
1111& 1111(PF)&2222 (PF) &2222(PF) & 3342 (PF) &6,6,8,4 (PF)\\
1124 & -&-& - &338A (PF) &6,6,16,20 (PF) \\
1133& 1133&2266 & 2266 &- &-\\
1214 & -&- & -&364B & 3,15,8,22\\
1223& 1223&2437 & 1537 & 3687 &3,15,16,14\\
1232& 1232&2473 & 1573 & - & -\\
1313& 1313&- & - & 3948 & -\\
1322& 1322&2644 & - & 3984 & -\\
1331& 1331&- & - & - & -\\
2114 & -&- & - &634B &15,3,8,22\\
2123& 2123&4237 & 5137 & 6387 &15,3,16,14\\
2132& 2132&4273 & 5173 & - & -\\
2222 & 2222(PF)&4444 (PF) &4444(PF) & 6684 (PF) & 12,12,16,8 (PF)\\
2213& 2213&- & - & 6648 (PF) & 12,12,8,16 (PF)\\
2231& 2231&- & - & - & -\\
2312& 2312&4615 & 3715 & 6945 &9,21,8,10\\
2321& 2321&4651 & 3751 & 6981&9,21,16,2\\
3113& 3113&- & - & 9348& -\\
3122& 3122&6244 & - & 9384 & -\\
3131& 3131&- & - & - & -\\
3212& 3212&6415 & 7315 &9645 &21,9,8,10\\
3221& 3221& 6451 & 7351 & 9681 & 21,9,16,2\\
3311& 3311&6622 & 6622 & 9942(PF)& 18,18,8,4 (PF)\\
3324& -& - & -& 998A (PF) & 18,18,16,20 (PF)\\
3333& 3333(PF)& 6666 (PF) & 6666(PF) & - &-\\
\hline
\end{array}
\end{gather*}\vspace{-5mm}

\caption{Generators for $H^3\big(V^{(i)}_\lambda\big)^G$.} \label{tblform}
\end{figure}

The following proposition now shows that claim about $W_{\lambda}^{(i)}$ for $i=1,2,3,6,7$:

\begin{Proposition}\label{prpdelPezzo} Consider one of the families $X_{\lambda}^{(i)}$ with $i\in \{1,2,3,6,7\}$ from Fig.~{\rm \ref{tbl}}. Then there exist families of del Pezzo surfaces~$S^{(i,j)}_{\lambda}$ and degree $2$ morphisms $\varphi^{(i,j)}_{\lambda}\colon X_{\lambda}^{(i)} \to S^{(i,j)}_{\lambda}$ such that if $i\in \{1,6,7\}$ then for almost all $\lambda$ we have that
\begin{gather*} W_\lambda^{(i)} \subset \sum_j \varphi^{(i,j)*}_{\lambda} \big(H^2\big(S^{(i,j)}_{\lambda}\big)\big)\end{gather*}
and if $i\in \{2,3\}$ then for almost all $\lambda$ we have that
\begin{gather*} W_\lambda^{(i)} \cap \sum_j \varphi^{(i,j)*}_{\lambda} \big(H^2\big(S^{(i,j)}_{\lambda}\big) \big)\end{gather*}
has codimension $2$ in $W_{\lambda}^{(i)}$ and the forms $\tilde{\omega}_{1133}$, $\tilde{\omega}_{3311}$ generate a complementary subspace in~$W_\lambda^{(i)}$.

If $i=3$ or $i=7$ or $q\equiv 1 \bmod 4$ then we can take the $S^{(i,j)}_{\lambda}$ to be defined over~$\mathbf{F}_q$. If $q\equiv 3 \bmod 4$ and $i\in\{1,2,6\}$ then some of the $S^{(i,j)}_{\lambda}$ are only defined over~$\mathbf{F}_{q^2}$.
\end{Proposition}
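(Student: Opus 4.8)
The plan is to realise every $S^{(i,j)}_\lambda$ as a quotient of $X^{(i)}_\lambda$ by an anti-symplectic involution and to identify $\varphi^{(i,j)*}_\lambda\big(H^2(S^{(i,j)}_\lambda)\big)$ with the invariant part of the cohomology, which is automatically algebraic.

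First I would produce the seed involutions. For each $i\in\{1,2,3,6,7\}$ the polynomial $F_0$ has a pair of coordinates, say $\{x_0,x_1\}$, entering symmetrically, and I would consider the scaled transposition $\sigma_a\colon x_0\mapsto a x_1$, $x_1\mapsto a^{-1}x_0$, fixing the remaining coordinates. This $\sigma_a$ preserves $\prod_k x_k$ and the whole family $X^{(i)}_\lambda$ provided $a^k=1$, where $k=4$ when the relevant summand is $x_0^4+x_1^4$ and $k=2$ when it is $x_0^3x_1+x_0x_1^3$. As a single transposition up to scaling its matrix has determinant $-1$, so it acts by $-1$ on the residue of $\Omega/F$, i.e.\ $\sigma_a$ is anti-symplectic; moreover its only projective fixed point off the invariant plane does not lie on $X^{(i)}_\lambda$, so its fixed locus is a single smooth plane quartic of genus $3$. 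Hence the quotient $S=X^{(i)}_\lambda/\langle\sigma_a\rangle$ is the double cover of $\mathbf{P}^2$ branched over that quartic, i.e.\ a del Pezzo surface of degree~$2$, and $\varphi$ is the quotient map.

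Next I would enlarge the supply of del Pezzo surfaces by twisting with the torus automorphisms: since $G_{\max}/G$ is a finite abelian group acting on $X^{*}_\lambda$, each involution $\tau\in G_{\max}/G$ yields a new anti-symplectic involution $\tau\sigma_a$, again with a single smooth genus-$3$ fixed curve, hence a further del Pezzo quotient $S^{(i,j)}_\lambda$. For a degree-$2$ morphism $\varphi\colon X\to S$ with deck involution $\iota$ the projection formula gives $\varphi^{*}H^2(S,\mathbf{Q})=H^2(X,\mathbf{Q})^{\iota}$, and $h^{2,0}(S)=0$ forces these classes to be algebraic. Thus $\sum_j\varphi^{(i,j)*}_\lambda\big(H^2(S^{(i,j)}_\lambda)\big)$ is exactly the span of the subspaces of $W^{(i)}_\lambda$ on which the various $\tau\sigma_a$ act by $+1$. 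The decisive and most laborious step is then to check that these invariant pieces fill up $W^{(i)}_\lambda$ (resp.\ a codimension-$2$ subspace). Here I would use the explicit generators in Fig.~\ref{tblform}: decompose $W^{(i)}_\lambda$ into $G_{\max}/G$-eigenspaces, record the sign by which $\sigma_a$ acts on each listed $\omega_{\mathbf{k}}$, and verify that suitable $\tau\sigma_a$ isolate each eigenspace on which $\sigma_a$ acts by $+1$. For $i\in\{1,6,7\}$ this exhausts $W^{(i)}_\lambda$; for $i\in\{2,3\}$ the same bookkeeping shows that $\tilde\omega_{1133}$ and $\tilde\omega_{3311}$ are sent to their negatives by every available involution, so they lie in no del Pezzo pullback and constitute precisely the promised codimension-$2$ complement. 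I expect this spanning verification, together with confirming that each twisted $\tau\sigma_a$ is genuinely an involution with irreducible genus-$3$ fixed curve, to be the main obstacle.

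Finally, the field of definition is read off from the order of the scalar $a$. When the symmetric summand is $x_0^4+x_1^4$ one needs $a^4=1$, so some $\sigma_a$ use $a=\pm i$ and are defined over $\mathbf{F}_q$ only for $q\equiv1\bmod4$; when the summand is $x_0^3x_1+x_0x_1^3$ only $a=\pm1$ occurs and every quotient is defined over $\mathbf{F}_q$. Since families $3$ and $7$ draw their seed involution from an $x_0^3x_1+x_0x_1^3$ block while families $1,2,6$ require an $x_0^4+x_1^4$ block, this yields exactly the stated dichotomy, with the $\mathbf{F}_{q^2}$ descent obstruction appearing only for $i\in\{1,2,6\}$ when $q\equiv3\bmod4$.
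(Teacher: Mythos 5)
Your proposal is correct and follows essentially the same route as the paper: your scaled transpositions $\sigma_a$ with $a^4=1$ (resp.\ $a^2=1$) are exactly the paper's involutions $\sigma$, $\tau$ and $\tau_1\colon (x_0,x_1,x_2,x_3)\mapsto (Ix_1,-Ix_0,x_2,x_3)$, the quotients are the same degree-$2$ del Pezzo surfaces (the paper realises them explicitly as quartics in $\mathbf{P}(1,1,1,2)$ via $u=x_0+x_1$, $v=x_0x_1$ rather than via the anti-symplectic/genus-$3$ fixed-curve argument), and your planned sign bookkeeping on the generators of Fig.~\ref{tblform} — including the observation that $\tilde{\omega}_{1133}$, $\tilde{\omega}_{3311}$ are anti-invariant under every available involution, and that the need for $a=\pm I$ versus $a=\pm 1$ produces the $q\equiv 1\bmod 4$ dichotomy for $i\in\{1,2,6\}$ — is precisely the computation the paper carries out.
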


\begin{proof} Note that $W_{\lambda}^{(i)}$ is spanned by $\tilde{\omega}_{\mathbf{m}}$ where $\mathbf{m}$ are precisely these entries from the f\/irst column of Fig.~\ref{tblform} such that in the column corresponding to~$i$ there the entry is dif\/ferent from~``$-$'' and is without the mark~``$(PF)$''. Note also that in the notation of the previous section we have $\tilde{U}^{(i)}_{\lambda}={U}^{(i)}_{\lambda}$. Hence we denote dif\/ferential forms on the complement of $X_\lambda^{(i)}$ with $\tilde{\omega}$ and forms on the complement of $Y_{\lambda}^{(i)}$ with $\omega$.

A def\/ining polynomial for $X^{(i)}_{0}$ can be found in Fig.~\ref{tbl}. Recall that for each family we took $(1,1,1,1)$ as the deformation factor. In particular, each of the f\/ive families under consideration is each invariant under the automorphisms $\sigma$ and $\tau$ def\/ined by \begin{gather*} \sigma(x_0,x_1,x_2,x_3):=(x_1,x_0,x_2,x_3),\qquad \tau(x_0,x_1,x_2,x_3):=(-x_1,-x_0,x_2,x_3).\end{gather*} A straightforward calculation shows that the quotients of $X_{\lambda}^{(i)}$ by $\sigma$ and by $\tau$ are both surfaces of degree 4 in $\mathbf{P}(1,1,1,2)$, and that for general $\lambda$ they are smooth (explicit equations for these surfaces can be found in the appendix). Hence the quotient surfaces are del Pezzo surfaces of degree $2$. Denote the corresponding surfaces with $S^{(i,1)}_{\lambda}$ and~$S^{(i,2)}_{\lambda}$

Let $\mathbf{m}=(a,b,c,d)$ with $a,b,c,d\in \{1,2,3,4\}$ be such that $a+b+c+d\equiv 0 \mod 4$. Then $\tilde{\omega}_\mathbf{m}+\sigma^*\tilde{\omega}_{\mathbf{m}}$ is invariant under $\sigma^*$ and therefore contained in $\pi_1^*\big(H^2\big(S^{(i,1)}_{\lambda}\big)\big)$. Similarly, $\tilde{\omega}_\mathbf{a}+\tau^*\tilde{\omega}_{\mathbf{a}}$ is contained in $\pi_2^*\big(H^2\big(S^{(i,2)}_{\lambda}\big)\big)$. Now $\tilde{\omega}_\mathbf{m}+\sigma^*\tilde{\omega}_{\mathbf{m}}=\tilde{\omega}_{abcd}-\tilde{\omega}_{bacd}$ and $\tilde{\omega}_\mathbf{m}+\tau^*\tilde{\omega}_{\mathbf{m}}=\omega_{abcd}+(-1)^{a+b+1}\omega_{bacd}$. Hence, if $a+b$ is odd then $\tilde{\omega}_{\mathbf{m}}\in \pi_1^*\big(H^2\big(S^{(i,1)}_{\lambda}\big)\big)+\pi_2^*\big(H^2\big(S^{(i,2)}_{\lambda}\big)\big)$. In the case $i=7$ we have that $W_{\lambda}^{(i)}$ is generated by forms $\tilde{\omega}_{\mathbf{m}}$ with $a+b$ odd and we f\/inished this case. In the case $i=3$ we have that~$W_{\lambda}^{(i)}$ is generated by forms with $a+b$ odd and the two forms $\tilde{\omega}_{1133}$ and~$\tilde{\omega}_{3311}$. Hence we f\/inishes also this case.

In the remaining cases $i=1,2,6$ we have a further automorphism \begin{gather*}\tau_1\colon \ (x_0,x_1,x_2,x_3)\mapsto (Ix_1,-Ix_0,x_2,x_3),\end{gather*} where $I^2=-1$. Denote $S^{3,i}_{\lambda}$ the quotient by $\tau_1$. If $q\equiv 1\bmod 4$ then $S^{3,i}_\lambda$ is def\/ined over $\mathbf{F}_q$, but if $q\equiv 3 \bmod 4$ then it is only def\/ined over $\mathbf{F}_{q^2}$.

Note that \begin{gather*}\tau_1^*(\tilde{\omega}_{abcd})=(-1)^{b+1}(I)^{a+b} \tilde{\omega}_{bacd}.\end{gather*} Hence if $b$ is odd and $a+b\equiv 0 \bmod 4$ then $\tilde{\omega}_{abcd}+\tilde{\omega}_{bacd}$ is f\/ixed under $\tau_1^*$ and, as above, we f\/ind that $\tilde{\omega}_{abcd}$ is in $ \pi^*_1(H^2(S^{i,1}_{\lambda}))+\pi^*_3(H^2(S^{i,3}_{\lambda}))$.

Using Fig.~\ref{tblform} we can conclude that we recovered any $\tilde{\omega}_{\mathbf{m}}$ such that the f\/irst two entries are distinct. This f\/inishes the proof for the case $i=6$. In the case $i=2$, we only miss the forms~$\tilde{\omega}_{1133}$ and~$\tilde{\omega}_{3311}$, hence we are also done in this case. In the case $i=1$ there is a $S_4$ symmetry we can use. We recover all~$\tilde{\omega}_{\mathbf{k}}$ with at least two distinct entries in~$\mathbf{k}$ and this f\/inishes also this case.
\end{proof}

\begin{Remark} In the cases $i=2,3$ we do not recover $\tilde{\omega}_{1133}$ and $\tilde{\omega}_{3311}$. However, the families $i=1,2,3$ have
\begin{gather*} x_0^{24}+x_1^{24}+x_2^{24}+x_3^{24}+\lambda\left(x_0x_1x_2x_3\right)^6\end{gather*}
as a common cover. For each of three families the form $\tilde{\omega}_{1133}$ is pulled back to the form $\omega_{6,6,18,18}$ on $V_\lambda$. Hence we can use $X^{(1)}_\lambda$ to express this form in terms of divisors pulled back form~$S^{1,3}_{\lambda}$.
\end{Remark}

In the following examples we discuss how to f\/ind generators for the subspace~$C$. For the examples $i=4,5,8$ we list a basis for $H^3\big(V_\lambda^{(i)}\big)^{G_{\max}}$ and also discuss strategies to f\/ind generators for~$W_\lambda$.

\begin{Example} For the case $i=1,2,3,6,7$ we note that Proposition~\ref{prpdelPezzo} yields a basis for $W^{(i)}_\lambda$ in terms of curves pulled back from del Pezzo surfaces $S^{(i,j)}$. In the appendix we will explain how to f\/ind these curves.

For each of these cases we can f\/ind generators for $C$ in each of these cases, but the approach depends on~$i$:
\begin{itemize}\itemsep=0pt
\item[$i=1$] $C=0$ in this case.
\item[$i=2$] The curves given by $x_3=0$, $x_0=-I^k x_1$ and the ones given by $x_4=0$, $x_0=-I^kx_1$ are in $C$, with $I^2=-1$. One easily checks that they generate~$C$. These curves can also be obtained by pulling back curves from the del Pezzo quotients: For example, consider the quotient by the automorphism $\sigma\colon (x_0,x_1,x_2,x_3)\mapsto (x_1,x_0,x_2,x_3)$. We f\/ind that $\omega_{abcd}-\omega_{bacd}$ is a $1$ eigenvector if $(a,b)\neq (b,a)$. However, there are only~5 such eigenvectors. The Picard group of the del Pezzo surface has rank 8. The additional three divisors are the hyperplane class and the two curves pulled back from the curves $x_3=x_1^2+x_2^2=0$ and $x_4=x_1^2+x_2^2=0$.
\item[$i=3$] We have that for $j<2$ and $k>1$ the line $x_j=x_k=0$ is contained in $X_{\lambda}^{(3)}$ as are $x_m=0$, $x_2=\pm I x_3$, $m \leq 2$ and $x_m=0$, $x_1=\pm I x_0$, $m\in \{2,3\}$. These are 12 curves, but generate a rank 9 sublattice of the Picard lattice, and this lattice contains the hyperplane class. Linear combinations of these curves span~$C$.
 \item[$i=6$] As in the case $i=2$ in this case we have that the automorphism $\sigma$ f\/ixes only six eigenvectors of the form $\tilde{\omega}_{\mathbf{k}}-\tilde{\omega}_{\sigma(\mathbf{k})}$. The seventh eigenvector is the class of the curve $x_3=0$, $x_0^2+x_1^2$, which is an element of~$C$. The other coordinate hyperplanes yields three further curves, contributing another two to the Picard number.
\item[$i=7$] In this case we take $x_0=0$ or $x_1=0$ then we f\/ind $x_3(x_2^3+x_3^3)=0$. In this way we f\/ind 8 lines, contributing six to cohomology.
\end{itemize}
\end{Example}

 We discuss now the other f\/ive examples:

\begin{Example} Consider now the case $i=4$. In this case the Fermat cover $Y_{\lambda}^{(4)}$ has degree~28. Let $G$ be the associated group of torus automorphisms. Then the multiples $\omega_{\mathbf{k}}$ with $\mathbf{k}$ a~multiple of $(7,7,7,7)$ generate a~rank~3 subspace of $H^3\big(V_{\lambda}^{(4)}\big)^G$ which is common to the examples $i=1,2,3$. The other monomial types associated with forms in $H^3(V_{\lambda}^{(4)})^G$ are
\begin{gather*} (7,11,15,23),\ (7,3,27,19),\ (14,2,18,22), \ (14,6,26,10), \ (21,1,9,25), \ (21,5,17,13)\end{gather*}
and those obtained by a cyclic permutation of the last three coordinates. In particular, these~$\omega_{\mathbf{k}}$ generate already a rank~21 subspace of $H^3(V_\lambda)^G$, which has dimension at most~21. Hence we found a basis for $H^3\big(V_{\lambda}^{(4)}\big)^G$ and we have $C=0$ in this case.

One can obtain some information on the zeta function as follows. If $q\equiv 1 \bmod 28$ then we can factor the zeta function over~$\mathbf{Q}_q$ according to strong equivalence classes (cf.\ \cite[Section~4]{Zetafamrec}). The strong equivalence class of $(7,7,7,7)$ consists further of $(14,14,14,14)$ and $(21,21,21,12)$. The class of $(7,11,23,15)$ consists further of $(14,18,2,22)$ and of $(21,25,9,1)$. The class of $(7,3,19,27)$ consists further of $(14,10,26,6)$ and of $(21,17,5,13)$. The other classes can be obtained by permutation the last three coordinates. In particular, we f\/ind that the characteristic polynomial on $H^3\big(V_{\lambda}^{(4)}\big)^G$ can be written as $P_3Q_3^3R_3^3$, where $P_3$, $Q_3$ and~$R_3$ are in $\mathbf{Q}_q[T]$ and have degree 3. The polynomial $P_3$ is the common factor and by Corollary~\ref{corMain} in~$\mathbf{Q}[T]$. Since $P_3Q_3^3R_3^3\in \mathbf{Q}[T]$ we f\/ind that also $Q_3(T)R_3(T)$ is in $\mathbf{Q}[T]$.

The results from \cite[Section 5]{zetafam} yield three explicit matrices, each $3\times 3$, whose entries are rational functions of generalised $p$-adic hypergeometric functions, such that the three corresponding characteristic polynomials are $P_3, Q_3$ and $R_3$.

As mentioned in the introduction of this paper, we did not f\/ind a complete set of generators for generic Picard group for two of the ten families. This family is one of these two families.
\end{Example}

\begin{Example} The degree of the Fermat cover of the f\/ifth example is 80. The monomial type $(20,20,20,20)$ and its two multiplies in $H^3\big(V_{\lambda}^{(5)}\big)$ yield the factor common with the examples $i=1,2,3,4$.

The other monomial types associated with classes in $H^3\big(V_{\lambda}^{(5)}\big)^G$, are \begin{gather*}(4,52,36,58),\ (24,72,56,8),\ (44,12,76,28),\ (64,32,16,48)\end{gather*} and the cyclic permutations of these.
Hence $W_\lambda^{(5)}$ has dimension 16. The subspace~$C$ has dimension~2 and contains the classes of the lines $x_0=x_2=0$ and $x_1=x_3=0$.

To f\/ind the curves contributing to the rank 16 part, we can use permutations, similarly as in the above examples. The cyclic permutation of $1$, $2$, $3$, $4$ is odd. Denote this permutation by~$\sigma_0$. The quotient by this permutation is a del Pezzo surface of degree~5.

Fix now a primitive f\/ifth root of unity $\zeta$. Let
\begin{gather*} \rho:=(x_0,x_1,x_2,x_3)\mapsto \big(\zeta x_0, \zeta^3 x_1,\zeta^4 x_2,\zeta^2 x_1\big).\end{gather*} For $i=1,2,3,4$ we set $\sigma_i=\sigma_0\rho^i$. Then each $\sigma_i$ has order 4.

Let $\mathbf{m}$ be a monomial types such that $\tilde{\omega}_{\mathbf{m}}$ is pulled back to one of the 16 forms $H^3\big(V_{\lambda}^{(5)}\big)^G$ not a multiple of $(20,20,20,20)$.

Consider now $\big\{ \sum\limits_{j=0}^3 \sigma_i^j \tilde{\omega}_{\mathbf{m}} \colon i=0,\dots,3\big\}$. A~direct calculation using a Vandermonde determinant shows that these four forms are linearly independent and that their span contains $\tilde{\omega}_{\mathbf{k}}$. Hence $\tilde{\omega}_{\mathbf{k}}$ is contained in the subspace spanned by $\mathop{\cup}\limits_{i=1}^4 H^3(U_\lambda)^{\sigma_i}$. So each of the $\tilde{\omega}_{\mathbf{k}}$ can be expressed as a linear combination of curves on the del Pezzo surface~$X_{\lambda}^{(5)}/\sigma_i$, with $i=0,1,2,3$.

Using the terminology of \cite[Section~6]{zetafam} we have two weak equivalence classes of monomial types, one consisting of three monomial types and consisting of 16 monomial types. The large class decomposes in four strong equivalence classes. These four strong equivalence classes are in one $\sigma$-orbit. From this we obtain if $q\equiv 1 \bmod 20$ then the characteristic polynomial of Frobenius is $P_4 Q_4^4$, where both~$P_4$ and~$Q_4$ are of degree~4.

A dif\/ferent approach to f\/ind curves on $X_{\lambda}^{(5)}$ would be to use the line $x_0=x_2=0$ to f\/ind an elliptic f\/ibration. A Weierstrass equation for this f\/ibration is
 \begin{gather*} y^2=x^3 -27s^4\big(\lambda^4+144\big)x-54s\big({-}s^5\lambda^6+864s^{10}+648s^5\lambda^2+864\big).\end{gather*}
For general $\lambda$ the f\/ibration has 2 f\/ibers of type~$II$ and 20 f\/ibers of type~$I_1$. The sections of this f\/ibration and the f\/iber class generate the Picard group for general~$\lambda$.
 \end{Example}

\begin{Example} For $i=8$ that the Fermat cover has degree 12, and the deformation vector is pulled back to $4242$. The Picard--Fuchs equation has order 4. The other monomial types $\omega_{\mathbf{k}}$ are built up from pairs from $42$, $45$, $48$, $4B$, $81$, $84$, $87$, $8A$ (where $A=10$, $B=11$) such that the entries add up to a multiple of $12$ and such that $\mathbf{k}$ is not a multiple of~$4242$. In total we f\/ind 12 such forms.

The complementary f\/ive-dimensional subspace comes from coordinate plane sections, i.e., $x_1=0$ yields $x_3\big(x_2^3+x_3^3\big)$, and also $x_3=0$ contributes. The total contribution is~5. We do not have any odd permutation to work with. However, this surface has many elliptic f\/ibrations and one may be able to work with them.

As mentioned in the introduction of this paper, we did not f\/ind a complete set of generators for generic Picard group for two of the ten families. This family is one of these two families.
\end{Example}

\begin{Example} In the ninth example we have that the common cover has degree~36. The deformation monomial has exponents $9$, $12$, $8$, $7$. There are 18 multiples of this vector without a zero in $\mathbf{Z}/36\mathbf{Z}$. Hence the Picard--Fuchs equation has degree 18. Moreover, the curves $x_2=0$, $x_0=i^kx_4$ together with the hyperplane class generate the generic Picard group.
\end{Example}

\begin{Example} In the tenth examples we have that the Fermat cover has degree 108. The deformation monomial has exponents $36$, $24$, $28$, $20$. There are 18 multiples of this vector without a zero in $\mathbf{Z}/108\mathbf{Z}$. Hence the Picard--Fuchs equation has degree~18. Moreover, we have the curves $x_1=x_3=0$, $x_1=0$, $x_2=\omega^i x_3$ and $x_3=0$, $x_0^3-x_1^2x_2=0$. This are f\/ive curves admitting two relations.
\end{Example}

\begin{Remark}In two cases we did not f\/ind generators. In these two cases dif\/ferent there is no permutation $\sigma$ of the coordinates which is automorphism of the family and such that the quotient surface is a rational surface. In the other examples with nontrivially $W_\lambda$, this space was generated by pull backs of curves coming from rational surfaces.

It is the author's experience that in characteristic zero, establishing explicit curves generating the Picard group of a surface, is an easier problem when working with surfaces with $h^{2,0}=0$ then when working with surfaces with $h^{2,0}>0$. This can be partly explained by the fact that degrees and intersection numbers of generators of the Picard group are determined by the topology of the surface in the case $h^{2,0}=0$, but not in the case $h^{2,0}>0$.

A similar problem is determining a basis of the Mordell--Weil group of an elliptic~K3 surfaces (which is equivalent to determining generators for the N\`eron--Severi group of that surface). This turned out to be much simplif\/ied if the~$K3$ surface is in various ways the pull back of a rational elliptic surface. (E.g., see~\cite{ToMe,KloKuw,Scholten}.)
\end{Remark}

\appendix

\section{Bitangents to special plane quartics}\label{appBit}

In Section~\ref{secQua} we considered ten pencils of quartic surfaces. In Proposition~\ref{prpdelPezzo} we showed that f\/ive of these pencils are (each in multiple ways) double covers of pencils of del Pezzo surfaces of degree two and we showed how the knowledge of the Picard group of these del Pezzo surfaces is suf\/f\/icient to determine the generic Picard group of each pencil. In this section we explain how one can f\/ind explicit generators for the Picard group of these del Pezzo surfaces. It is well-known that such a surface is a double cover of~$\mathbf{P}^2$ ramif\/ied along a quartic curve.

If the quartic curve is smooth then its has 28 bitangents. These bitangents are pulled back to two lines on the del Pezzo surface, and these lines generate the Picard group.

In order to f\/ind explicit equations for the del Pezzo surfaces of degree 2 and the quartic curves we are going to make the steps from the proof of Proposition~\ref{prpdelPezzo} explicit. This proposition applies only to $X^{(i)}_{\lambda}$ with $i\in \{1,2,3,6,7\}$, hence we concentrate on these cases. To ease the calculations we start by decomposing the def\/ining polynomials for $X_\lambda^{(i)}$ in sums of two polynomials. Therefore def\/ine the following polynomials
 \begin{gather*} f_1(x_0,x_1,x_2,x_3):=x_0^4+x_1^4+\lambda x_0x_1x_2x_3,\\
 f_2(x_0,x_1,x_2,x_3):=x_0x_1\big(x_0^2+x_1^2\big)+\lambda x_0x_1x_2x_3 ,\\
 g_1(x_2,x_3):=x_2^4+x_3^4, \\
 g_2(x_2,x_3):=x_2x_3\big(x_2^2+x_3^2\big), \\
 g_3(x_2,x_3):=x_2^3x_3+x_3^4,\\
h_1(u,v,x_2,x_3):=u^4-4u^2v+2v^2+\lambda vx_2x_3,\\
h_2(u,v,x_2,x_3):=v\big(u^2-2v\big)+\lambda vx_2x_3.\end{gather*}

The f\/ive pencils of quartic surfaces under consideration are def\/ined by the vanishing of
\begin{gather*} f_1+g_1,\quad f_1+g_2,\quad f_2+g_2, \quad f_1+g_3, \quad f_2+g_3.\end{gather*}

\subsection[$S^{(i,1)}_{\lambda}$]{$\boldsymbol{S^{(i,1)}_{\lambda}}$}
As we noted in the proof of Proposition~\ref{prpdelPezzo} each of these families is invariant under the automorphism $\sigma\colon (x_0,x_1,x_2,x_3)\mapsto (x_1,x_0,x_2,x_3)$.

In particular, each of the def\/ining polynomials is also a polynomial in $x_0+x_1$, $x_0x_1$, $x_2$, $x_3$. We def\/ined $h_1$, $h_2$ such that \begin{gather*}h_j(x_0+x_1,x_0x_1,x_2,x_3)=f_j(x_0,x_1,x_2,x_3).\end{gather*} Therefore the quotient $S^{(i,1)}_{\lambda}$ of $X^{(i)}_{\lambda}$ by $\sigma$ is the zeroset of
\begin{gather*} h_1+g_1,\quad h_1+g_2,\quad h_2+g_2, \quad h_1+g_3,\quad h_2+g_3\end{gather*}
in $\mathbf{P}(1,2,1,1)$. These polynomials def\/ine f\/ive families of surfaces in $\mathbf{P}(1,2,1,1)$. The general member is a del Pezzo surface of degree 2. The rational map $\mathbf{P}(1,2,1,1)\dashrightarrow \mathbf{P}^2$ def\/ined by $(u:v:x_2:x_3)\to (u:x_2:x_3)$ is def\/ined on all of $S^{(i,1)}_{\lambda}$. It establishes this surfaces as a double cover of $\mathbf{P}^2$ ramif\/ied along the zeroset of~$q_i$, the discriminant~$q_i$ of the def\/ining polynomial of~$S^{(i,1)}_{\lambda}$ considered as polynomial in~$v$. These discriminant are straightforward to compute. We list them here:
\begin{gather*}
q_1:=-8x_2^4+\lambda^2 x_2^2x_3^2-8\lambda x_2x_3u^2-8x_3^4+8u^4,\\
q_2:=-8x_2^3x_3+\lambda^2x_2^2x_3^2-8x_2x_3^3-8\lambda x_2x_3u^2+8u^4,\\
q_3:=8x_2^3x_3+\lambda^2x_2^2x_3^2t+8x_2x_3^3+2\lambda x_2x_3u^2+u^4,\\
q_6:=-8x_2^4+\lambda^2x_2^2x_3^2-8x_2^3x_3-8\lambda x_2x_3u^2+8u^4,\\
q_7:=8x_2^4+\lambda^2x_2^2x_3^2+8x_2^3x_3+2\lambda x_2x_3u^2+u^4.
\end{gather*}
Our aim is to f\/ind the bitangents to these curves and then pull them back to $X^{(i)}_{\lambda}$. If $\lambda$ is chosen such that the quartic curve is smooth then there are 28 bitangents. We start by looking for bitangents of the shape $u=a_2x_2+a_3 x_3$. Such a line is a bitangent to the curve~$q_i=0$ if we can f\/ind further $b$, $c$ such that the following polynomial vanishes
\begin{alignat*}{3}& q_1(a_2x_2+a_3x_3,x_3,x_2)-8\big(a_2^4-1\big) \big(x_2^2+bx_2x_3+cx_3^2\big)^2\qquad && \text{if} \quad i=1,&\\
&q_2(a_2x_2+a_3x_3,x_3,x_2)-8a_2^4 \big(x_2^2+bx_2x_3+cx_3^2\big)^2 \qquad &&\text{if} \quad i=2,&\\
&q_3(a_2x_2+a_3x_3,x_3,x_2)-a_2^4\big(x_2^2+bx_2x_3+cx_3^2\big)^2 \qquad &&\text{if} \quad i=3,&\\
&q_6(a_2x_2+a_3x_3,x_3,x_2)-8a_2^4\big(x_2^2+bx_2x_3+cx_3^2\big)^2 \qquad &&\text{if} \quad i=6,&\\
& q_7(a_2x_2+a_3x_3,x_3,x_2)-a_2^4 \big(x_2^2+bx_2x_3+cx_3^2\big)^2 \qquad &&\text{if}\quad i=7. & \end{alignat*}
The factors $8\big(a_2^ 4-1\big)$, $8a_2^ 4$ and $a_2^4$ are chosen in order to kill the coef\/f\/icient of $x_3^4$ in each of the polynomials. Hence each of the f\/ive above polynomials is a polynomial of degree 3 in $x_3$. These polynomials can be computed with the help of some computeralgebra package. Unfortunately, the obtained expressions are too long to include them here. From these calculations one deduces that both the coef\/f\/icient of $x_3^3$ and of~$x_3^2$ are linear in~$b$ and~$c$. We can solve for $b$ and $c$ and substitute the result. However, in order to solve for~$b$ and~$c$ we have to divide by $a_2^4-1$ if $i=1$ and by $a_2$ in the other cases, hence for the moment we have to assume that they are nonzero.

We are then left with two nonzero coef\/f\/icients. The coef\/f\/icient of $x_3^0$ is a cubic in $a_3$. Eliminating $a_3$ leaves a polynomial of degree either $20$ (if $i=1$) or $24$ (if $i\neq 1$) in~$a_2$, which we list below.

Each of the zeroes yields a possible value for~$a_2$. One easily checks that each value for $a_2$ determines a unique value for~$a_3$. In this way we f\/ind $20$ or $24$ bitangents. Note that each of the f\/ive families admits the automorphism $(u,v,x_2,x_3)\mapsto (u,v,-x_2,-x_3)$. This implies that if~$(a_2,a_3)$ def\/ines a bitangent then so does~$(-a_2,-a_3)$. Hence the f\/inal polynomial in~$a_2$ is actually a polynomial in~$a_2^2$. Depending on the case there are further automorphisms, which could give further simplif\/ications.

We now list for each case the degree 24 polynomial in $a_2$. The case $i=1$ is slightly more involved then the other ones, so we start with the case $i\geq 2$.

For $i=2$ we f\/ind that if $a_2$ is a zero of
 \begin{gather*}
 \big(\big(\lambda^2+16\big)a_2^4+4\lambda a_2^2\lambda+2\big)\big(\big(\lambda^2-16\big)a_2^4+4\lambda a_2^2\lambda+2\big) \cdots \\
\qquad {} \cdots \big(1024 a_2^{16}+128 \lambda^3 a_2^{14}+\big(2\lambda^6+960\lambda^2\big)a_2^{12}+\big(20\lambda^5+2560\big) a_2^{10}+\cdots\\
\qquad{} \cdots + \big(73\lambda^4 +2176\big) a_2^{8}+120\lambda^3a_2^{6}+92\lambda^2 a_2^{4}+32\lambda a_2^{2}+4\big) \end{gather*}
then there is a unique $a_3$ yielding a bitangent. The degree 16 factor can be written as the product of two factors of degree 8 over $\mathbf{F}_q\big(\sqrt{2}\big)$. This yields 24 of the 28 bitangents.

 For $i=3$ we f\/ind that $a_2$ is a zero of
\begin{gather*}
\big(2a_2^4+ta_2^2+2\big)\big(2a_2^4-ta_2^2-2\big) \cdots \big(a_2^8+4a_2^6+\big( \lambda^2-4\lambda+8\big)a_2^4+(4\lambda-8)a_2^2+4\big) \cdots \\ \qquad{} \cdots \big(a_2^8-4a_2^6+\big( \lambda^2+4\lambda+8\big)a_2^4+(4\lambda+8)a_2^2+4\big) \end{gather*}
then there is a unique $a_3$ yielding a~bitangent. Each of the two degree 8 factors is a product of two factors of degree 4 factors over $\mathbf{F}_q\big(\sqrt{-1}\big)$. This yields~24 of the 28 bitangents.

For $i=4$ we f\/ind that $a_2$ is a zero of
 \begin{gather*}
\big({-}512\lambda^{6}-1769472\big)a_2^{24}-9216\lambda^{5}a_2^{22} +\big(2\lambda^{10}-62976\lambda^{4}\big)a_2^{20}+ \cdots \\
\qquad{} \cdots \big(36\lambda^{9}-286720\lambda^{3}\big)a_2^{18}+\big(273\lambda^{8}-663552\lambda^{2}\big)a_2^{16}+ \cdots \\
\qquad{} \cdots +\big(1136\lambda^{7}-700416\lambda\big)a_2^{14} +\big(2840\lambda^{6}-276480\big)a_2^{12}+4416\lambda^{5} a_2^{10}+ \cdots \\
\qquad {}\cdots +4312\lambda^{4} a_2^8+2624\lambda^{3}a_2^6+960\lambda^{2}a_2^4+192\lambda a_2^2+16\end{gather*}
 then there is a unique $a_3$ yielding a bitangent. This yields 24 of the 28 bitangents.

For $i=5$ we f\/ind that $a_2$ is a zero of
 \begin{gather*} \big({-}3a_2^8+2\lambda a_2^6 +\lambda^2 a_2^4+12a_2^4+4\lambda a_2^2+4\big) \cdots \\
 \qquad{} \cdots \big(9a_2^{16}+6\lambda a_2^{14} +\big(7\lambda^2 a_2^{12}+36\big)a_2^{12}+\big(60-2\lambda^3\big)a_2^{10}\lambda + \cdots \\
 \qquad{} \cdots +\big(\lambda^4-20\lambda^2+156\big)a_2^8+\big(8\lambda^3-56\lambda\big)a_2^6 +\big(24\lambda^2-48\big)a_2^4+32a_2^2\lambda +16\big) \end{gather*}
then there is a unique $a_3$ yielding a bitangent. This yields~24 of the 28 bitangents. If $\omega^2=-\omega-1$ then over $\mathbf{F}_q(\omega)$ we can write the degree 16 factor as a product of two factors of degree~8.

To f\/inish the cases $i=2,3,6,7$ we need to f\/ind~4 further bitangents. The above approach gives all bitangents of the form $u=a_2x_2+a_3x_3$ with $a_2\neq 0$. It turns out that there are no bitangents with $a_2=0$, however there are bitangents of the form $a_2x_2+a_3x_2=0$. One easily sees that the line $x_3=0$ is a hyperf\/lex line (and therefore a bitangent) and that the remaining three bitangents are of the form $x_2=ax_3$, with $a$ a zero of
\begin{alignat*}{3} & a\big(8a^2+\lambda^2 a +8\big) &&\qquad \text{if} \quad i=2,&\\
& a\big(a^2+1\big) \qquad &&\text{if} \quad i=3,&\\
 &8a^3+ \lambda^2 a^2+2 \qquad &&\text{if} \quad i=6,&\\
& (a+1)\big(a^2-a+1\big) \qquad &&\text{if}\quad i=7. &\end{alignat*}

In the cases $i=2,3,6,7$ we f\/ind that 24 of the bitangents can be described in terms of a~polynomial in~$a_2$ of degree~$24$. Since $a_1$ is the unique root of a polynomial with coef\/f\/icients in $\mathbf{F}_q(\lambda,a_2)$ we f\/ind that $a_1\in \mathbf{F}_q(\lambda,a_2)$. The equations def\/ining $c$ and $d$ are linear, hence they are also in $\mathbf{F}_q(\lambda, a_2)$. Hence the bitangent is def\/ined over~$\mathbf{F}_q(\lambda,a_2)$. However, the lines on the del Pezzo surface may be def\/ined over a degree~2 extension. If $\ell=V(-u+ax_2+bx_3)$ then the equation for the del Pezzo surface is a quadratic equation in $v$. It restriction to $u=a_1x_2+a_2x_3$ is a quadratic equation with discriminant $q_i|_{\ell}$. This discriminant is of the form $C_i\big(x_2^2+b_ix_2x_3+c_ix_3^2\big)^2$. Hence to def\/ine each of the two corresponding lines on~$S^{(1,i)}_\lambda$ we need to take a square root of $C$. An explicit calculation now show that $C$ depends only on~$i$ and~$a_2$. More precisely, we have that $C_i$ equals $8a_2^4$, $a_2^4$, $8a_2^4$, $a_2^4$ for $i=2,3,6,7$. Hence for $i=3,7$ both lines are def\/ined over $\mathbf{F}_q(\lambda,a_2)$ but for $i=2,6$ they are def\/ined over $\mathbf{F}_q\big(\lambda,a_2,\sqrt{2}\big)$.

Similarly, one easily checks that the f\/lex line is def\/ined over $\mathbf{F}_q$ and that the two corresponding lines on the del Pezzo surface are def\/ined over $\mathbf{F}_q$ if $i=3,7$ and over $\mathbf{F}_q\big(\sqrt{-1}\big)$ if $i=2,6$ and that each of the remaining lines are def\/ined over $\mathbf{F}_q(a)$ if $i=3,7$ and $\mathbf{F}_q\big(\sqrt{2},\lambda,a\big)$ if $i=2,6$.

For $i=1$ we can copy the above approach, but in the f\/irst step we f\/ind 20 rather than 24 bitangents of the form $u=a_2x_2+a_3x_3$. These 20 bitangents are one of the following (where $I$ is a f\/ixed root of~$-1$):
\begin{enumerate}\itemsep=0pt
\item[1)] $a_3=a_2$ and $a_2^4 \big(\lambda^2+16\big)-16\lambda a_2^2+\lambda^2 +16=0$,
\item[2)] $a_3=-a_2$ and $a_2^4 \big(\lambda^2+16\big)+16\lambda a_2^2+\lambda^2 +16=0$,
\item[3)] $ a_3=Ia_2$ and $a_2^4\big(\lambda^2-16\big)-16 I \lambda a_2^2+\lambda^2-16=0$,
\item[4)] $ a_3=-Ia_2$ and $a_2^4\big(\lambda^2-16\big)-16 I \lambda a_2^2+\lambda^2-16=0$,
\item[5)] $ a_3=\frac{t}{4a_2}$ and $a_3^4=1$.
\end{enumerate}
Using symmetry we f\/ind that further bitangents are given by $ a_3=\frac{t}{4a_2}$ and $a_2^4=1$.

There are four further bitangents of the form $x_2=ax_3$ with
\begin{gather*} 8a^4+\lambda^2 a^2+8=0.\end{gather*}
One easily checks that the corresponding lines on the del Pezzo surface are def\/ined over the f\/ield $\mathbf{F}_q\big(\lambda, a_2,\sqrt{2(a_2^4-1)}\big)$ (if $a_2^4\neq 1$), over $\mathbf{F}_q\big(\lambda, a_1,\sqrt{2(a_3^4-1})\big)$ (if $a_3^4\neq 1$) and over $\mathbf{F}_q\big(\lambda, a,\sqrt{2}\big)$ (for the f\/inal four lines).

\subsection[$S^{(i,2)}_{\lambda}$ and $S^{(i,3)}_{\lambda}$]{$\boldsymbol{S^{(i,2)}_{\lambda}}$ and $\boldsymbol{S^{(i,3)}_{\lambda}}$}
Once we found the lines on $S^{(i,1)}_\lambda$ we can use them to f\/ind also the lines on $S^{(i,2)}_\lambda$ and $S^{(i,3)}_\lambda$.

Let
\begin{gather*} h_3(u,v,x_2,x_3):=u^4+4u^2v+2v^2+\lambda vx_2x_3, \qquad h_4(u,v,x_2,x_3):=v\big(u^2+2v\big)+\lambda vx_2x_3.\end{gather*}
Proceeding as above we f\/ind that that $S^{(i,2)}_{\lambda}$ is def\/ined by
\begin{gather*} h_3+g_1,\quad h_4+g_2,\quad h_4+g_2,\quad h_3+g_3,\quad h_4+g_3.\end{gather*}
The map $(u,v,x_2,x_3)\mapsto (uI,v,x_2,x_3)$ def\/ines an isomorphism $S^{(i,2)}_{\lambda} \to S^{(i,1)}_{\lambda}$ for $i=1,2,6$. The map $(u,v,x_2,x_3)\mapsto (uI,-v,Ix_2,Ix_3)$ def\/ines an isomorphism $S^{(i,2)}_{\lambda} \to S^{(i,1)}_{\lambda}$ for $i=3,7$.

Let $h_5(u,v,x_2,x_3):=u^4-4 I u^2v-2v^2+\lambda vx_2x_3$ then $S^{(i,3)}_{\lambda}$ is def\/ined (for $i=1,2,6$) by
\begin{gather*} h_5+g_1,\quad h_5+g_2,\quad h_5+g_3.\end{gather*}
The map $(u,v,x_2,x_3)\mapsto (u,Iv,x_2,Ix_3)$ def\/ines an isomorphism $S^{(i,3)}_{\lambda} \to S^{(i,2)}_{\lambda}$ for $i=1$. The map $(u,v,x_2,x_3)\mapsto \big(u,Iv,\zeta x_2,\zeta^5 x_3\big)$ def\/ines an isomorphism $S^{(i,3)}_{\lambda} \to S^{(i,2)}_{\lambda}$ for $i=2$.

For $i=7$ we need also to act on $\lambda$: The map $(u,v,x_2,x_3)\mapsto (u,Iv, x_2, x_3)$ def\/ines an isomorphism $S^{(i,3)}_{-I\lambda} \to S^{(i,2)}_{\lambda}$ for $i=2$.

Substituting $v=x_0x_1$ and $u=x_0+x_1$ (if $j=1$), $u=x-y$ (if $j=2$) or $u=x+Iy$ (if $j=3)$ in the equations of a line on $S^{(i,j)}_{\lambda}$ then yields the corresponding conic on $X_\lambda^{(i,j)}$.

\subsection*{Acknowledgements}

The author would like to thank John Voight and Tyler Kelly for various conversations on this topic. The author would like to thank the referees for various suggestions to improve the exposition.

\pdfbookmark[1]{References}{ref}
\LastPageEnding

\end{document}